\documentclass[11pt,reqno]{amsart}

\usepackage[margin=1in]{geometry}
\usepackage{amsmath,amssymb,amsthm,mathtools}
\usepackage{xcolor}
\usepackage{hyperref}
\usepackage{tikz-cd}
\newtheorem{theorem}{Theorem}[section]
\newtheorem{proposition}[theorem]{Proposition}
\newtheorem{lemma}[theorem]{Lemma}
\newtheorem{definition}[theorem]{Definition}
\newtheorem{corollary}[theorem]{Corollary}
\theoremstyle{definition}
\newtheorem{remark}[theorem]{Remark}

\newcommand{\ii}{\sqrt{-1}}
\newcommand{\calO}{\mathcal O}

\newcommand{\C}{\mathbb C}
\newcommand{\p}{\partial}
\newcommand{\bp}{\overline\partial}
\newcommand{\bL}{\mathbb L}

\newcommand{\dbar}{\overline\partial}

\DeclareMathOperator{\I}{\text{Id}}
\renewcommand{\leq}{\leqslant}
\renewcommand{\geq}{\geqslant}

\renewcommand{\epsilon}{\varepsilon}
\renewcommand{\bar}{\overline}

\numberwithin{equation}{section}

\title{Compact K\"ahler surfaces with semipositive anticanonical bundle}

\begin{document}
\author{Yifan Chen}
\address{Universit\`a di Roma Tor Vergata, Dipartimento di Matematica, Via della Ricerca Scientifica 1, 00133 Roma, Italy}
\email{chen@mat.uniroma2.it}
\author{Simion Filip}
\address{Department of Mathematics, University of Chicago, 5734 S University Ave, Chicago, IL 60637}
\email{sfilip@math.uchicago.edu}
\author{Song Sun}
\address{Institute for Advanced Study in Mathematics, Zhejiang University, Hangzhou 310058, China}
\email{songsun@zju.edu.cn}
\author{Valentino Tosatti}
\address{Courant Institute School of Mathematics, Computing, and Data Science, New York University, 251 Mercer St, New York, NY 10012}
\email{tosatti@cims.nyu.edu}
\author{Junsheng Zhang}
\address{Courant Institute School of Mathematics, Computing, and Data Science, New York University, 251 Mercer St,  New York, NY 10012}
\email{jz7561@nyu.edu}

\begin{abstract}
We classify compact K\"ahler surfaces with semipositive anticanonical bundle. In particular, we resolve a question of Demailly--Peternell--Schneider: any blowup of $\mathbb{P}^2$ at $9$ points on a smooth cubic has semipositive anticanonical bundle.
\end{abstract}
\maketitle

\section{Introduction}
\subsection{Embeddings of curves in surfaces with degree zero normal bundle}
Let $M$ be a complex surface and $D$ be a smooth compact connected divisor. Denote by $L$ the holomorphic normal bundle of $D$.
We assume that $L$ is numerically trivial, i.e., $\deg L=D^2=0.$
The line bundle $\mathcal{O}_M(D)$ on $M$ restricts to $L$ along $D$, and this restriction admits a flat Hermitian metric. A natural question one can ask is whether this flatness can be propagated to a neighborhood of $D$: does there exists an open neighborhood $D\subset U\subset M$ such that $\mathcal{O}_M(D)|_U$ admits a flat Hermitian metric?


This question was first studied in the foundational work of Ueda \cite{Ueda} (see also Arnold \cite{arnold}) for a general smooth compact connected curve embedded in a complex surface with degree zero normal bundle. In this setting, he was able to show that the answer is affirmative if $L$ is torsion or satisfies a Diophantine condition. Further work of Brunella \cite{brunella} and Koike \cite{Koike2023} also studied this problem, and Ogawa \cite{ogawa} relaxed the Diophantine condition to the weaker arithmetic ``Brjuno condition''. Our first result is a resolution of this question for anticanonical curves, which by adjunction have genus $1$:

\begin{theorem}\label{thm:dps-semipositivity}
Let $M$ be a complex surface and $D$ be a smooth compact connected divisor $D\in|K^{-1}_M|$. Then there exists an open neighborhood $U$ of $D$, such that $\mathcal{O}_M(D)|_U$ admits a flat Hermitian metric. In particular,  $K_M^{-1}\simeq\mathcal{O}_M(D)$ is Hermitian semipositive, i.e. admitting a smooth Hermitian metric $h$ with curvature $R_h\geq 0$ on $M$.
\end{theorem}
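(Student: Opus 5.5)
The condition to construct is the following. A flat Hermitian metric on $\mathcal O_M(D)|_U$ is the same as an open cover $\{U_j\}$ of a neighborhood of $D$, together with local defining functions $w_j$ of $D$ on $U_j$, such that $w_j=t_{jk}w_k$ on overlaps with \emph{constants} $|t_{jk}|=1$. Equivalently, we want a closed logarithmic $1$-form $\theta$ on $U\setminus D$, with simple pole and residue $1$ along $D$, whose periods along loops in $U\setminus D$ lie in $2\pi\ii\,\mathbb R$. Given such $\theta$, the local primitives $w_j=\exp\int\theta$ have transition constants of modulus one, so $|w_j|^{-2}$ glue to a flat metric. I would build $\theta$ using the extra structure that $D\in|K_M^{-1}|$ provides.

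\emph{Step 1: the log symplectic form.} Since $D\in|K_M^{-1}|$, there is a meromorphic $2$-form $\Omega$ on $M$ with a simple pole exactly along $D$ and no zeros. By adjunction $D$ is an elliptic curve, and the residue $\alpha=\mathrm{Res}_D\Omega$ is a nowhere vanishing holomorphic $1$-form on $D$. In local coordinates $(z,w)$ with $D=\{w=0\}$ we can write $\Omega=f\,dz\wedge dw/w$ with $f$ nonvanishing.

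\emph{Step 2: normal form for $\Omega$ via a Moser argument.} The first goal is a local normal form $\Omega=dz\wedge dw/w$, where $z$ is a flat coordinate on $D$ with $dz=\alpha$. On the model $L$ (the total space of the normal bundle, with its flat connection), the form $\Omega_0=dz\wedge dw/w$ is well defined, because the transition functions of $L$ can be taken unitary constants. I would compare $\Omega$ and $\Omega_0$ on a tubular neighborhood identified with a neighborhood of the zero section in $L$ by a smooth map that is holomorphic to high order along $D$. I would then run a holomorphic Moser path $\Omega_s$ and solve for the generating vector field $X_s$ from $\iota_{X_s}\Omega_s=\beta_s$, where $\Omega-\Omega_0=d\beta$. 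The crucial point is that this equation is \emph{algebraic}, not a $\bar\partial$-equation with small divisors: $\Omega_s$ is nondegenerate as a log form, so $X_s$ is determined pointwise and is tangent to $D$ when $\beta_s$ is logarithmic. This is exactly where the anticanonical hypothesis should remove Ueda's Diophantine condition.

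The cohomological input is producing $\beta$, i.e.\ showing the class of $\Omega-\Omega_0$ vanishes in log de Rham cohomology of the germ of $(U,D)$. That germ retracts onto $D$, so the relevant groups are finite dimensional. The residue and period matching, which use that $\alpha$ is the residue of both forms, should kill the class after adjusting $\Omega_0$ by a constant multiple and a choice of flat structure on $L$. Formally, I would first check order by order that the Ueda obstructions in $H^1(D,L^{-n})$ vanish. For $L^n$ nontrivial these groups are zero. In the torsion case, the obstruction pairs against $\alpha$ and is forced to vanish by $d\Omega=0$, since comparing the coefficient of $w^{n}\,dz\wedge dw/w$ gives an exact term.

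\emph{Step 3: conclude.} In normal-form charts, $\theta=dw/w$ is well defined up to the closed holomorphic forms that the transitions preserve. Transitions preserving $dz\wedge dw/w$ and $D$ have the form $(z,w)\mapsto(z+c+\phi(w),\,t\,w\,e^{g})$, with $g$ constrained by the Jacobian condition. I would further use the freedom in Moser's argument to make the transitions linear in $w$, giving constants $t_{jk}$. Because $D$ is compact, $\prod|t_{jk}|$ around cycles equals the modulus of the holonomy of $L$, which is $1$. Hence $|t_{jk}|=1$ after a constant rescaling, yielding the flat metric on $\mathcal O_M(D)|_U$. Semipositivity on all of $M$ then follows by gluing: take $h=h_{\mathrm{flat}}$ near $D$, and off $D$ use the weight $\max_\epsilon(\log|s|^2_{h_{\mathrm{flat}}},\,\log|s|^2_{h_0}+C)$ built from the canonical section $s$ of $\mathcal O_M(D)$. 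This is a regularized maximum of functions that are psh where they matter; the curvature is $\geq0$, and the metric is smooth.

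The main obstacle is Step 2, namely convergence, i.e.\ avoiding small divisors. I would first try to prove that the Moser vector field $X_s$ is holomorphic and tangent to $D$ on a \emph{fixed} neighborhood. This reduces convergence to ODE flow estimates, uniform because $\Omega_s$ is uniformly nondegenerate. If the exactness $\Omega-\Omega_0=d\beta$ fails at the level of log forms, I would instead pass to the $\mathbb Z^2$-cover of the tube and solve there, averaging to restore equivariance.
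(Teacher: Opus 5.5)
There is a genuine gap, and it sits at the step that carries the whole difficulty. First, the output of your Step~2 cannot exist in general. On $M\setminus D$ the form $\Omega$ determines $J_M$: a complex vector $V$ is of type $(0,1)$ if and only if $\iota_V\Omega=0$. Likewise $\Omega_0$ determines $J_L$. Hence a diffeomorphism of tubular neighborhoods with $\Phi^*\Omega_0=\Omega$ is a biholomorphism off $D$, and then across $D$ by Riemann extension. So Step~2 would show that every such $D$ has a holomorphic tubular neighborhood, i.e.\ that the embedding is linearizable.

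This is false. As recalled in the introduction, \cite{FT} gives blowups of $\mathbb P^2$ at nine points of a smooth cubic in which the anticanonical strict transform has non-torsion normal bundle and is not linearizable. When $N_D$ is non-torsion the formal obstructions vanish, so the failure there is exactly divergence caused by small divisors. No ``uniform ODE flow estimates'' on a fixed neighborhood can therefore hold.

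The Moser mechanism is also ill-posed as stated. $\Omega$ and $\Omega_0$ are holomorphic for different complex structures on the tube, since your identification is holomorphic only to finite order along $D$. So there is no holomorphic Moser path. For a real vector field, $\iota_{X_s}\Omega_s=\beta_s$ is $8$ real equations in $4$ real unknowns, and a complex solution generates no flow. The equation is pointwise solvable only when both forms are holomorphic for a single complex structure, which presupposes the holomorphic identification you are trying to build. Exactness of $\Omega-\Omega_0$ in (log) de~Rham cohomology is a topological statement and says nothing about this.

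Suppose instead that Step~2 only yields local log-Darboux charts, which always exist. Then the sentence in Step~3, ``use the freedom in Moser's argument to make the transitions linear in $w$'', is precisely vertical linearizability, i.e.\ the theorem itself (Proposition~\ref{ued}), and you give no mechanism for it. Correcting the charts mode by mode means inverting $\bp$ on $L^{-m}$, and that inverse blows up as $L^{-m}$ approaches $\calO_D$.

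The missing idea is to ask for much less than a normal form and to use $d\Omega_M=0$ against the small divisors. The paper seeks only a fiber-preserving smooth map $\Phi_f(\zeta)=e^{f(\zeta)}\zeta$, with $f$ fiberwise holomorphic, such that $\Theta=dw/w$ becomes of type $(1,0)$. Equivalently, the horizontal part of $\Phi_f^*\Omega_M$ vanishes, and then $-\log|\zeta|^2_h$ is pluriharmonic. Mode by mode, closedness gives $m\beta_m=d_H\alpha_m$. So the $\bp$-equation that must be inverted has right-hand side $\iota_v\p_{L^{-m}}v_m$, and Lemma~\ref{l:flat_isometry} inverts it with norm at most one, uniformly in $m$. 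This yields a bounded right inverse $\mathcal Q_T$ and a contraction argument with no arithmetic hypothesis.

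A smaller point concerns your final gluing. There $\log|s|^2_{h_0}+C$ has no sign for $dd^c$, and a regularized maximum does not preserve plurisuperharmonicity, so you do not get $R_h\geq0$. Glue the pluriharmonic weight with a constant instead, as in Brunella's argument.
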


Note that no Diophantine or arithmetic condition is needed, and $M$ is not assumed to be compact. The fundamental reason for this comes from the assumption that $D\in |K^{-1}_M|$, as we will explain below. Without this assumption, embeddings of an elliptic curve into a noncompact surface $M$ with degree zero normal bundle for the conclusion of Theorem \ref{thm:dps-semipositivity} fails were constructed by Ueda \cite[\S5.4]{Ueda}. More recently, Koike-Ogawa \cite{KO} showed that in these examples we even have that $\mathcal{O}_M(D)$ is not Hermitian semipositive.

\subsection{Semipositive Ricci curvature}
The case when the surface $M$ is compact K\"ahler is of particular interest, since by Yau's Theorem, the condition that $K_M^{-1}$ is Hermitian semipositive is equivalent to $M$ admitting a K\"ahler metric with nonnegative Ricci curvature.

The problem of classifying compact K\"ahler surfaces with nonnegative Ricci curvature is also classical, and was famously studied by Yau, who obtained a partial classification of such surfaces \cite[Theorem 3]{yau}. It was also later investigated by Demailly--Peternell--Schneider \cite[Example~2 and p.~223]{DPS1996}, who raised two questions at the end of their paper. The first question, which asked whether every rationally connected compact
K\"ahler manifold with nef anticanonical bundle has Hermitian semipositive
anticanonical bundle, was answered negatively by Koike \cite{Koike2017}, who showed that there exist counterexamples which are obtained from $\mathbb{P}^2$ by blowing up $9$ points along a nodal cubic. 

The second question they posed remained open: does every blow up of $\mathbb{P}^2$ along $9$ points on a {\em smooth} cubic have Hermitian semipositive
anticanonical bundle? The $9$ points are allowed to be ``infinitely near'' in the sense that, after each blow-up, the center of the next blow-up may be any point
on the strict transform of the cubic. This question was subsequently studied by a number of people, including Sad \cite{Sad}, Eckl \cite[\S 4.3]{eckl},  Brunella \cite{brunella}, Claudon-Loray-Pereira-Touzet \cite{CLPT}, and Koike \cite{Koike2017, Koike2023}.
Theorem~\ref{thm:dps-semipositivity} immediately answers this question affirmatively:

\begin{corollary}
Let $M$ be the blowup of $\mathbb{P}^2$ at $9$ points (possibly infinitely near) on a smooth cubic. Then $K_M^{-1}$ is Hermitian semipositive.
\end{corollary}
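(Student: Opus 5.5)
The plan is to deduce the corollary from Theorem~\ref{thm:dps-semipositivity}. The only inputs needed are that $K_M^{-1}$ has a smooth member, namely the strict transform of the cubic, and that this member is compact and connected.

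First I fix the cubic. Let $C\subset\mathbb{P}^2$ be the smooth cubic and $\pi\colon M\to\mathbb{P}^2$ the composition of the nine blowups $M=M_9\to M_8\to\cdots\to M_0=\mathbb{P}^2$. The $i$-th blowup is centered at a point $p_i$ lying on the strict transform $C_{i-1}\subset M_{i-1}$ of $C$.

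Next I show by induction that $C_i\in|K_{M_i}^{-1}|$ and that $C_i\cong C$ is a smooth elliptic curve. For $i=0$ this holds because $K_{\mathbb{P}^2}^{-1}=\calO(3)$. For the inductive step, let $E_i$ be the exceptional curve of $b_i\colon M_i\to M_{i-1}$. The blowup formula gives $K_{M_i}=b_i^*K_{M_{i-1}}+E_i$. Since $p_i$ is a smooth point of $C_{i-1}$, it has multiplicity one there, so $b_i^*C_{i-1}=C_i+E_i$. Combining these,
\[
K_{M_i}^{-1}=b_i^*K_{M_{i-1}}^{-1}-E_i=\calO_{M_i}(b_i^*C_{i-1}-E_i)=\calO_{M_i}(C_i).
\]
Moreover, the strict transform of a smooth curve under a blowup at one of its points is isomorphic to the curve, so $C_i$ is smooth and compact. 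It is also connected, since it is irreducible.

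Finally I apply the theorem. Set $D=C_9\subset M$. Then $D$ is a smooth compact connected divisor in $|K_M^{-1}|$. For context, $D^2=9-9=0$, consistent with the setting of the theorem, although the theorem does not need this as a hypothesis. Theorem~\ref{thm:dps-semipositivity} then gives a smooth Hermitian metric on $K_M^{-1}\simeq\calO_M(D)$ with semipositive curvature.

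There is no real obstacle here. The only point needing care is the infinitely near case, where a center $p_i$ may lie on an earlier exceptional curve. Even then $p_i$ is required to lie on $C_{i-1}$, which is smooth there, so the computation $b_i^*C_{i-1}=C_i+E_i$ is unchanged.
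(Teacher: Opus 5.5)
Your argument is correct and is the same as the paper's, which deduces the corollary immediately from Theorem~\ref{thm:dps-semipositivity} applied to the strict transform $D$ of the cubic, a smooth elliptic curve in $|K_M^{-1}|$. One correction to a side remark: $D^2=0$ \emph{is} a hypothesis of Theorem~\ref{thm:dps-semipositivity}. It is the standing assumption $\deg L=D^2=0$ of \S1.1, used to put a flat metric on $L$, and without it the theorem fails. For example, after blowing up $10$ points on the cubic one gets $K_M^{-1}\cdot D=D^2=-1$, so $K_M^{-1}$ is not even nef. Your computation $D^2=9-9=0$ is therefore a required verification rather than mere context.
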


Furthermore, combining Theorem \ref{thm:dps-semipositivity} with further results by Koike  \cite{Koike2023,Koike2026} and \cite{Sakai2001}, we obtain a complate classification of compact K\"ahler surfaces with semipositive anticanonical bundle, or equivalently of compact K\"ahler surfaces that admit a K\"ahler metric with non-negative Ricci curvature:

\begin{theorem}
\label{thm:surface-semipositivity-classification}
Let $M$ be a smooth compact K\"ahler surface.  Then $K^{-1}_M$ is
Hermitian semipositive if and only if exactly one of the following holds.
\begin{enumerate}
\item[\textup{(A)}] The line bundle $K^{-1}_M$ is semiample.
\item[\textup{(B)}] The surface $M$ is rational and $\lvert K^{-1}_M\rvert$ contains a smooth elliptic curve $D$ such that $N_{D} \in \operatorname{Pic}^0(D) $ is non-torsion.
\item[\textup{(C)}] The surface $M$ is rational and $|K^{-1}_M|$ contains a
reduced cycle $D$ of rational curves such that $N_D:=\calO_D(D)\in\operatorname{Pic}^0(D)\simeq\C^*$ whose period $q_D$ satisfies that $|q_D|=1$ but is not a root of unity.
\item[\textup{(D)}] There are an elliptic curve $B$ and a non-torsion
line bundle $L\in\operatorname{Pic}^0(B)$ such that
\[
       M\simeq\mathbb P_B(\calO_B\oplus L).
\]
\end{enumerate}
\end{theorem}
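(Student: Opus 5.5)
The proof splits into two directions: each of (A)–(D) implies semipositivity, and conversely semipositivity forces exactly one of them. The case analysis runs on the numerical dimension of $K_M^{-1}$.

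\emph{Sufficiency.} If $K_M^{-1}$ is semiample, some power $K_M^{-m}$ is globally generated. Pulling back the Fubini--Study metric along the induced morphism and taking the $m$-th root gives a smooth semipositive metric. In case (B), Theorem~\ref{thm:dps-semipositivity} applies directly with $D\in|K_M^{-1}|$ smooth elliptic. The flat metric on $U$ is glued to the singular metric $|s_D|^{-2}$-type weight away from $D$ by a regularized maximum, which is exactly the "in particular" clause. In case (C), a cycle of rational curves with $|q_D|=1$ non-torsion, I would invoke Koike's results \cite{Koike2023,Koike2026}. These give a flat neighborhood of the cycle under the unitary-period assumption, since the torsion and Diophantine obstructions disappear for cycles. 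The same gluing then applies. In case (D), $\mathcal O_B\oplus L$ with $L$ unitary flat gives a flat Hermitian metric on the bundle. The induced Fubini--Study metric on the fibres, together with the flat metric on $B$, gives a product-type metric with nonnegative Ricci curvature. Alternatively, $K_M^{-1}=\mathcal O(2)\otimes\pi^*L^{-1}$ up to flat twists, and this is semipositive.

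\emph{Necessity.} Let $h$ be semipositive on $K_M^{-1}$, so $K_M^{-1}$ is nef and $-K_M$ is pseudoeffective.
\begin{itemize}
\item If $K_M\equiv 0$, then $M$ is K3, Enriques, torus or bielliptic. Then $K_M$ is torsion, hence semiample, giving (A).
\item If $K_M^2>0$, $K_M^{-1}$ is nef and big, hence semiample by basepoint-free results for weak del Pezzo surfaces, giving (A).
\item The remaining case is $K_M^2=0$ with $K_M\not\equiv 0$. Then $\kappa(M)=-\infty$, so $M$ is rational or ruled.
\end{itemize}

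For ruled surfaces over a curve $B$ of genus $\ge1$, nefness of $-K_M$ forces $g(B)=1$. Atiyah's classification of rank-two bundles on elliptic curves then leaves three options: the decomposable bundle $\mathcal O\oplus L$ with $L\in\mathrm{Pic}^0$, the nonsplit extensions, and the odd-degree indecomposable bundle. For $L$ torsion, $-K_M$ is semiample, giving (A); for $L$ nontorsion we land in (D). For the nonsplit extension of $\mathcal O$ by $\mathcal O$, I would show non-semipositivity via the known Demailly--Peternell--Schneider example, where $-K$ is nef but not semipositive. The odd-degree case is excluded in the same way.

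For rational $M$ with $K_M^2=0$ and $-K_M$ nef, $h^0(-K_M)\ge1$ by Riemann--Roch. Sakai's classification \cite{Sakai2001} describes the anticanonical divisor $D\in|K_M^{-1}|$: smooth elliptic, a cycle of rational curves, or nonreduced or other configurations.
\begin{itemize}
\item If $N_D$ is torsion, $-K_M$ is semiample (elliptic fibration), giving (A).
\item If $D$ is smooth elliptic and $N_D$ is nontorsion, we are in (B).
\item If $D$ is a cycle, semipositivity forces $|q_D|=1$: a flat metric restricted to $D$ must make $N_D$ unitary. This is Koike's necessity criterion, and it gives (C).
\end{itemize}

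\emph{Main obstacle.} The hardest step is to exclude all remaining configurations: nonreduced anticanonical members, cuspidal or tangential configurations, and cycles with $|q_D|\ne1$. For these I would rely on Koike's obstruction theory \cite{Koike2017,Koike2026}. The idea is that a semipositive metric restricts to a flat metric on $N_D$, which is impossible when the normal bundle class is unipotent rather than unitary. These cases must be checked carefully against Sakai's list.

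Exclusivity of (A)–(D) is immediate. In (B), (C) and (D) the bundle $N_D$ (or $L$) is non-torsion, so $-K_M$ is not semiample. Moreover (B), (C) and (D) are pairwise distinguished by rationality and by the type of the anticanonical curve.
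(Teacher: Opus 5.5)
Your overall plan matches the paper's: semiample, Theorem~\ref{thm:dps-semipositivity}, Koike, or Fubini--Study for sufficiency; $\kappa=-\infty$ and then elliptic ruled versus rational generalized Halphen for necessity. However, two steps fail as written.

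\textbf{The elliptic ruled case.} The odd-degree indecomposable bundle ($e=-1$) cannot be ``excluded in the same way'' as the nonsplit self-extension of $\calO_B$, because there $K_M^{-1}$ is semiample. Indeed $M\simeq\operatorname{Sym}^2B$. The map $(a,\pm u)\mapsto\{a+u,a-u\}$ identifies $M$ with the quotient of $B\times(B/\pm1)\simeq B\times\mathbb P^1$ by the free action of the $2$-torsion subgroup $B[2]$. Under this étale quotient, $K_M^{-1}$ pulls back to $\mathrm{pr}_2^*\calO_{\mathbb P^1}(2)$. So this surface lies in (A), and any non-semipositivity argument for it would prove a false statement. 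The correct list of nef, non-semiample cases over an elliptic curve (which the paper takes from Bauer--Peternell) has only two entries: the nonsplit self-extension, excluded by \cite[Corollary~2.5.2]{DPS2001}, and $\calO_B\oplus L$ with $L$ non-torsion, which is case (D).

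\textbf{The rational case.} Your mechanism for the ``remaining configurations'' is to restrict the semipositive metric to $D$ and conclude that $N_D$ is unitary. This works for reduced cycles but gives nothing when $D$ is nonreduced. A metric restricted to $D$ only sees $\calO_{D_{\rm red}}(D)$. When $D_{\rm red}$ is a tree of smooth rational curves (the $\widetilde D_n$, $\widetilde E_n$ configurations of additive type), this bundle has multidegree zero and is automatically trivial. Meanwhile $N_D=\calO_D(D)\in\operatorname{Pic}^0(D)\simeq(\C,+)$ can be nontrivial.

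The missing input is a statement on a whole neighborhood of $D$. By \cite[Theorem~1.1]{Koike2026}, for nef $D$ of numerical dimension one, semipositivity is equivalent to $\calO_X(D)$ being unitary flat near $D$. Since $H_1(D_{\rm red},\mathbb Z)=0$ in additive type, this makes $\calO_X(D)$ holomorphically trivial near $D$, so $N_D\simeq\calO_D$. Lifting $1$ via $H^1(X,\calO_X)=0$ then gives a basepoint-free pencil. So these surfaces are not excluded; they fall into (A).

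\textbf{Two smaller gaps.} First, sufficiency in (C) is not a blanket result of Koike about cycles, and ``Diophantine obstructions disappear for cycles'' is not the mechanism. The paper contracts a $(-1)$-curve $E$, which meets $D$ transversally at one smooth point. This places $(X,D)$ in Koike's moving-blow-up family, and the paper checks his Assumptions 1--5 (including $5''$ at torsion parameters, via the fibration $|mY_s|$) before applying \cite[Theorem~3.1]{Koike2023}. Second, separating (B) from (C) requires that $D$ be the unique anticanonical member. This follows from $H^0(D,N_D)=0$ and the restriction sequence.
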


Here by \emph{reduced cycle of rational curves} we mean a connected nodal curve $ D=D_1+\cdots+D_r$ in which every irreducible component appears with multiplicity one and has normalization $\mathbb P^1$, and whose dual graph is homeomorphic to a circle. We allow the one-component nodal cycle and the two-component cycle whose components meet transversely at two points. 
For such a cycle $D$, choose an algebraic group isomorphism
\[
        \alpha_D:\operatorname{Pic}^0(D)\longrightarrow\C^*
\]
and, whenever $N_D\in\operatorname{Pic}^0(D)$, set
$q_D:=\alpha_D(N_D)$.  Any other choice replaces $q_D$ by $q_D^{-1}$, so the conditions $|q_D|=1$, torsion, and the torsion order are independent of the choice.

\begin{remark}
It is well-known that in case (A) of Theorem \ref{thm:surface-semipositivity-classification}, we can further refine the classification based on the value of the Iitaka dimension of $K_M^{-1}$, which is either $0,1,$ or $2$. When $\kappa(K_M^{-1})=2,$ $M$ is a weak del Pezzo surface, which is isomorphic to $\mathbb{P}^2,$ $\mathbb{P}^1\times\mathbb{P}^1,$ the Hirzebruch surface $\mathbb{F}_2$, or a blowup of $\mathbb{P}^2$ in up to $9$ points in ``almost general position'', see e.g. \cite[\S 8.1.3]{dolgachev}. When $\kappa(K_M^{-1})=1$, we must have $H^{2,0}(M)=H^0(M,K_M)=0$ so $M$ is projective, and by \cite[Prop. 1.5]{BauerPeternell2004} we have that $M$ is either finitely covered by $\mathbb{P}^1\times E$ with $E$ elliptic, or it is the blowup of $\mathbb{P}^2$ at the $9$ base points of a Halphen pencil (of some index $m\geq 1$). And when $\kappa(K_M^{-1})=0$, we have $K_M$ holomorphically torsion, so $M$ is $K3,$ a torus, an Enriques surface, or a bielliptic surface.
\end{remark}

\begin{remark}
\label{rem:non-kahler-surfaces}
For a compact non-K\"ahler surface $M$, the anticanonical bundle $K^{-1}_M$ is
Hermitian semipositive if and only if $M$ is a primary or secondary Kodaira
surface, or a Hopf surface that is finitely covered by a diagonal primary Hopf
surface. This is well known, but we were unable to locate a proof in
the literature, so we include one in Section \ref{sec:proof of remark} for completeness.
\end{remark}
\subsection{Applications}
\subsubsection{Ueda Theory}
We give an application of this result to Ueda theory \cite{Ueda, Neeman}, see Section \ref{ue} below for the basic definitions. An embedding $D\subset M$ of a connected compact complex curve into a complex surface, with degree zero normal bundle, is called of Ueda type $(\gamma)$ if the embedding is formally vertically linearizable (equivalently, all its Ueda classes vanish), but not vertically linearizable (which, when $M$ is compact, is equivalent to $\mathcal{O}_M(D)$ not being Hermitian semipositive). No such embeddings exist if $D$ has genus zero \cite{savelev}, while type $(\gamma)$ embeddings of an elliptic curve into a noncompact complex surface were constructed by Ueda \cite[\S5.4]{Ueda} by taking the suspension of a germ of a local biholomorphism defined near the origin in $\mathbb{C}$ with a Cremer fixed point at the origin, see also \cite{KO}. 

Constructing an example of a type $(\gamma)$ embedding into a \textit{compact} complex surface is a well-known open problem \cite[p.1464]{CLPT}, \cite[p.242]{Koike2023}, \cite[p.24]{Koike2024}. As a corollary of Theorem \ref{thm:surface-semipositivity-classification}, we show that no such embeddings exist when $D$ has genus $1$:

\begin{theorem}\label{ueda}
There is no embedding $D\subset M$ of a smooth elliptic curve $D$ into a compact complex surface $M$ with $\deg N_{D/M}=0$ and which is of Ueda type $(\gamma).$
\end{theorem}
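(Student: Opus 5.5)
We argue by contradiction. Suppose $D\subset M$ is a smooth elliptic curve in a compact complex surface, with $\deg N_{D/M}=0$, and the embedding is of Ueda type $(\gamma)$. Then all Ueda classes vanish, but $D$ has no neighborhood on which $\mathcal O_M(D)$ admits a flat Hermitian metric. We split according to whether $M$ is K\"ahler. The plan in both cases is to show that $\mathcal O_M(D)$ is semipositive near $D$ (or that $D$ is vertically linearizable by some other route), which contradicts type $(\gamma)$.

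\emph{Non-K\"ahler case.} Here $b_1(M)$ is odd. By Kodaira's classification, $M$ is then of class VII, a Kodaira surface, or a non-K\"ahler elliptic surface.
\begin{itemize}
\item If $M$ is elliptic, we show $D$ must be a fiber, or a multiple of one. A smooth elliptic curve with $D^2=0$ that is not contained in a fiber would meet the fibers positively, which we expect is impossible in a non-K\"ahler elliptic surface. Once $D$ is a fiber, a neighborhood of $D$ is a fibration over a disc, so $\mathcal O_M(D)$ is locally trivial or torsion, and the embedding is linearizable.
\item If $M$ is of class VII, the curves are well understood (Nakamura, Enoki). A smooth elliptic curve with self-intersection $0$ should force $M$ to be an elliptic Hopf surface or an Enoki-type surface. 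In the Enoki case the curves have negative self-intersection, so we only need to handle Hopf surfaces, where the neighborhood is explicitly a flat suspension.
\end{itemize}
This case-check is routine but tedious.

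\emph{K\"ahler case.} We reduce to Theorem~\ref{thm:surface-semipositivity-classification} and Theorem~\ref{thm:dps-semipositivity}. Vanishing of the first Ueda class, together with $\deg N_D=0$, gives the usual dichotomy.
\begin{itemize}
\item If $N_D$ is torsion, Ueda's theorem (torsion normal bundle plus vanishing Ueda classes) gives a fibration near $D$, hence linearizability. So we may assume $N_D$ is non-torsion.
\item Next we use the adjunction formula $K_M|_D\simeq N_D^{-1}$, since $K_D$ is trivial. We want to show that $D$ is essentially anticanonical, i.e.\ that $K_M^{-1}\otimes\mathcal O_M(-D)$ is trivial, or torsion after passing to a finite cover.
\end{itemize}
The key mechanism for the second point: since $D$ is formally linearizable with non-torsion normal bundle, $H^0(M,kD)$ stays one-dimensional. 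By Neeman/Ueda, $D$ has a pseudoconcave-type neighborhood, which constrains the Kodaira dimension. Running through the classification: if $\kappa(M)\ge0$, then $K_M$ is numerically trivial on $D$ and, after minimal models, one gets $K_M$ torsion. In that case $M$ is a torus, K3, Enriques, bielliptic, or a properly elliptic surface with $D$ a fiber, and in each case flat metrics are explicit or come from Calabi--Yau (Ricci-flat) metrics via Yau's theorem. If $M$ is ruled over an elliptic base, $D$ is a section with $D^2=0$, which leads to case (D) or a flat bundle. If $M$ is rational, we aim to show $D\in|K_M^{-1}|$, using that $h^0(K_M^{-1})\ge1$ and that $D$ is the unique curve in its class meeting $-K_M$ trivially. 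Then Theorem~\ref{thm:dps-semipositivity} yields a flat metric near $D$, which is a contradiction.

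\emph{Main obstacle.} The hard step is the rational case when $D\notin|K^{-1}_M|$. There I would first show that $-K_M-D$ is effective, by Riemann--Roch: $\chi(-K_M-D)=1+\tfrac12(-K_M-D)(-2K_M-D)$, which should be $\ge1$ after computing $K_M^2\ge0$-type bounds. I would then show that the effective divisor $E\in|-K_M-D|$ satisfies $E\cdot D=0$, so $E$ is disjoint from $D$. This would make $D$ anticanonical near itself after twisting by a line bundle that is trivial near $D$, and Theorem~\ref{thm:dps-semipositivity} is local in $M$, so it still applies.
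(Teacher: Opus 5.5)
\textbf{There is a genuine gap in the rational case.} Your reduction to non-torsion $N_D$ via Ueda's theorem and your split by Kodaira dimension follow the paper. The rational case, which you correctly identify as the crux, fails as proposed. Since $K_M\cdot D=D^2=0$, your Riemann--Roch computation gives $\chi(-K_M-D)=1+K_M^2$. Nothing forces $K_M^2\geq0$, and the claim $h^0(K_M^{-1})\geq1$ is also false in general.

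Here is a concrete example. Blow up $\mathbb P^2$ at $9$ points of a smooth cubic so that the strict transform $D$ has non-torsion normal bundle. Then blow up one further point $q\notin D$. One gets $K_M=-D+E_q$, so $-K_M-D=-E_q$ is not effective and $K_M^2=-1$. Moreover $h^0(K_M^{-1})=h^0(\mathcal O_M(D-E_q))=0$, because $s_D$ spans $H^0(\mathcal O_M(D))$ and does not vanish on $E_q$. All Ueda classes of $D\subset M$ vanish, since they lie in $H^1(D,N_D^{-n})=0$. So this is exactly a case your argument must rule out, and both ``$-K_M-D$ effective'' and ``$D\in|K_M^{-1}|$'' fail there.

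\textbf{The missing idea.} The germ of $D\subset M$ is unchanged by discarding or contracting curves disjoint from $D$. The paper uses Sakai's dichotomy \cite[Proposition 5]{Sakai}: either $D\sim -K_M$, or there is a $(-1)$-curve disjoint from $D$. One contracts such curves until $D$ is anticanonical and then applies Theorem~\ref{thm:dps-semipositivity}.

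Your own locality idea also works if you reverse the sign. We have $\chi(K_M+D)=1$ and $h^2(K_M+D)=h^0(-D)=0$, so some $\Gamma\in|K_M+D|$ exists. Since $p_g(M)=0$, $D$ is not a component of $\Gamma$, and then $\Gamma\cdot D=0$ forces $\Gamma\cap D=\emptyset$. Hence $D\in|K_U^{-1}|$ on $U:=M\setminus\Gamma$, and Theorem~\ref{thm:dps-semipositivity}, which does not assume compactness, applies.

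\textbf{Other cases that are only sketched.}
\begin{itemize}
\item For $\kappa(M)\geq0$: Ricci-flat metrics flatten $K_M^{-1}$, not $\mathcal O_M(D)$. What you actually need is that $D$ is a fibre, so $N_D$ is torsion. The paper instead cites \cite[Prop.~3.1.2]{AD} to get $\kappa(M)=-\infty$ directly.
\item In the ruled case: that $D$ becomes a section of a minimal model $\mathbb P_B(W)$ needs an argument. The paper contracts $(-1)$-curves disjoint from $D$ and uses \cite[Proposition 6]{Sakai}. After that, the extension splits because $H^1(B,N^{-1})=0$.
\item In class VII: the relevant surfaces are diagonal non-resonant primary Hopf surfaces, their cyclic quotients, and blow-ups of these at points off $D$. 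In elliptic Hopf surfaces $D$ lies in a fibre and $N_D$ is torsion, so your ``elliptic Hopf'' label points to the wrong case.
\end{itemize}
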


\subsubsection{Stein Complements}
Our last application of our results is to another well-studied question in the literature: given a compact complex surface $S$ containing a smooth curve $C$ with $(C^2)=0$, find necessary and sufficient conditions for $S\backslash C$ to be Stein. An obvious necessary condition is that $S\backslash C$ does not contain any compact curve or any compact Levi-flat real hypersurface, and an obvious sufficient condition is that $S\backslash C$ be the underlying analytic space of an affine algebraic variety. See for example \cite[Problem VI.3.4]{hartshorne}, \cite{Neeman2}, \cite{KP}, \cite{HP} and \cite{FX} for more on this, including examples of Stein but non-affine complements.

\begin{corollary}\label{stein}
Let $S$ be a compact complex surface and $C\subset S$ a smooth connected anticanonical divisor with $C^2=0.$ Then $S\backslash C$ is not Stein.
\end{corollary}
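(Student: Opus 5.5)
We prove Corollary \ref{stein} by contradiction, using Theorem \ref{thm:dps-semipositivity}. By that theorem, $\mathcal{O}_S(C)$ restricted to some open neighborhood $U$ of $C$ carries a flat Hermitian metric $h$.

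\emph{Step 1: a defining function that is pluriharmonic near $C$.} Let $s\in H^0(S,\mathcal{O}_S(C))$ be the canonical section cutting out $C$, and put $\phi=-\log|s|_h^2$ on $U\setminus C$. Since $h$ is flat and $s$ is holomorphic, $\ii\partial\bar\partial\phi=R_h=0$ there. So $\phi$ is pluriharmonic on $U\setminus C$ and tends to $+\infty$ at $C$. Shrinking $U$ to a tubular neighborhood, we may assume $\phi$ is proper from $U\setminus C$ onto $(c,\infty)$ for some $c$.

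\emph{Step 2: Levi-flat hypersurfaces.} Take a regular value $t>c$. The level set $H_t=\{\phi=t\}$ is a compact real hypersurface contained in $S\setminus C$. Because $\phi$ is pluriharmonic, $H_t$ is Levi-flat: locally $\phi=\mathrm{Re}\,f$ with $f$ holomorphic, so $H_t$ is foliated by the complex curves $\{f=\text{const}\}$. By the remark preceding the corollary, a Stein manifold cannot contain a compact Levi-flat real hypersurface. To make this self-contained, suppose $S\setminus C$ were Stein and let $\psi$ be a smooth strictly plurisubharmonic exhaustion of $S\setminus C$. The maximum of $\psi$ on the compact set $H_t$ is attained at some point $p$. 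Through $p$ passes a local holomorphic disc contained in $H_t$, namely a leaf of the foliation. Restricted to that disc, $\psi$ is strictly subharmonic but has an interior maximum at $p$, which is a contradiction.

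\emph{Step 3: the main obstacle.} The delicate point is Step 2's need for a regular value with a compact level set lying inside $U$. We handle it as follows. Near $C$, with $z$ a local defining function for $C$, we have $\phi=-\log|z|^2+O(1)$, and this is smooth away from $C$. Hence $d\phi\neq0$ for $\phi$ large, and for $t$ large the level set $H_t$ is a smooth compact circle bundle over $C$ lying in $U$. Sard's theorem is not even needed.

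Note that the argument never uses compactness of the leaves of $H_t$, only the maximum principle along local leaves. It therefore works regardless of whether $N_C$ is torsion.
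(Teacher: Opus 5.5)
Your proof is correct and follows the paper's argument. The paper likewise takes a pluriharmonic function with a logarithmic pole along $C$ on a punctured neighborhood of $C$ (its $r$, which is equivalent to your $-\log|s|_h^2$ via Proposition \ref{ued}), and notes that its large level sets are compact Levi-flat hypersurfaces, which a Stein manifold cannot contain. Your extra details, namely the nonvanishing of $d\phi$ near $C$ and the maximum-principle argument along leaves with a strictly plurisubharmonic exhaustion, supply what the paper leaves implicit.
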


The special case when $S$ is the blowup of $\mathbb{P}^2$ along $9$ (possibly infinitely near) points that lie on a smooth cubic and $C$ is the strict transform of that cubic was considered explicitly by Ogus \cite[p.223]{Ogus}, and further studied by Neeman \cite{Neeman}, and others. Brunella \cite[Remark 4]{brunella} had conjectured Corollary \ref{stein} in this case.

\subsection{Sketch of the proof}
We explain here the main idea of the proof of Theorem~\ref{thm:dps-semipositivity}. 
Under the assumptions of Theorem~\ref{thm:dps-semipositivity}, it suffices to construct a pluriharmonic function on the end of $X:=M\setminus D$ with logarithmic growth near $D$. Such a pluriharmonic function $z$ exists on the total space of punctured normal bundle $L^\times: = L\setminus {\bf 0}_L$ of $D$ with respect to the complex structure $J_L$. Geometrically, it is the $\mathbb R$ parameter of a cylindrical K\"ahler Ricci flat metric on $L^\times$. Using an exponential diffeomorphism $\Phi$, one can identify a neighborhood of $D$ in $M$ with a neighborhood of the zero section ${\bf 0}_L$ in $L$, and it is then natural to consider the pullback $\Phi^*z$. One would then like to find a function $u$ with growth rate slower than logarithmic, satisfying
\begin{equation}
    \ii\partial_{J_M}\dbar_{J_M}u = \ii\partial_{J_M}\dbar_{J_M}\Phi^*z.
\end{equation}
Although this is a linear equation, there appears to be no direct method for solving it with the estimates required here.
Therefore we replace this linear problem for $u$ by a nonlinear problem for the diffeomorphism $\Phi$, that is to say, we want to find a diffeomorphism $\Psi$ such that 
\begin{equation}\label{eq-ph in introduction}
\ii\partial_{J_M}\dbar_{J_M}\Psi^*z= \Psi^*(\ii\partial_{\Psi_*J_M}\dbar_{\Psi_*J_M}z) =0
\end{equation} on an end of $X$. 
A key ingredient in this construction is the closed $J_M$-holomorphic volume form $\Omega_M$ on $X$, which extends
meromorphically across \(D\) with a simple pole. It allows us to reduce the equation for the diffeomorphism to a first-order equation and, at the same time, to define a bounded operator with quantitative control. Using this operator, we formulate the appropriate nonlinear functional and then apply the Banach fixed-point theorem to find a zero of this functional, which in turn gives a solution to \eqref{eq-ph in introduction}.

As discussed in Section \ref{ue} below, given an embedding of an elliptic curve $D$ into a compact complex surface $M$, with degree zero normal bundle, work of Ueda \cite{Ueda}, Brunella \cite{brunella} and Koike \cite{koike} shows that proving that $\mathcal{O}_M(D)$ is Hermitian semipositive is equivalent to the germ of the embedding $D\subset M$ being ``vertically linearizable''. This is a condition of dynamical nature, which is implied by the stronger condition of ``linearizability'' (i.e. $D$ admitting a holomorphic tubular neighborhood in $M$). While in general linearizability is obstructed, even at the formal level, Ueda \cite{Ueda} observed that all degree-zero embeddings of an elliptic curve into a complex surface are always formally vertically linearizable, and to establish vertical linarizability one has to prove that these formal power series are convergent. Unfortunately, to prove this one runs into the well-known issue of ``small divisors'', which can destroy the convergence of the formal solution, unless one makes arithmetic assumptions on the normal bundle to prevent this. This does indeed happen for the full linearization problem: the second and fourth-named authors have recently shown \cite{FT} that given any smooth cubic in $\mathbb{P}^2$ there are many configurations of $9$ distinct points such that after blowup, the embedding of the strict transform of the cubic in the rational surface is not linearizable and has nontorsion normal bundle.

In the setting of Theorem \ref{thm:dps-semipositivity}, we have the key extra geometric assumption that $D\in |K^{-1}_M|$, which allows us to avoid small divisors, by exploiting  $\Omega_M$.
The holomorphic volume form allows us to turn the desired pluriharmonicity into a first-order
condition on the neighborhood identification. Its closedness relates
the tangential and normal components of the error through
\eqref{eq:closed-horizontal-identities} and \eqref{eq--beta and alpha}, ensuring that the terms affected by small
divisors are themselves proportionally small. The unitary flat
geometry makes this compensation precise, yielding the uniform
estimates \eqref{eq-vertical} and
\eqref{eq:pure-horizontal-Q-estimate}, even when powers of the normal
bundle approach the trivial bundle arbitrarily rapidly. Thus, the
geometric compatibility supplied by the volume form replaces
arithmetic assumptions usually needed to control small divisors.

\subsection{A relation to dynamics}
A closely related technique of proof appears in the work of Perez Marco and Yoccoz \cite{Perez-MarcoYoccoz1994_Germes-de-feuilletages-holomorphes-a-holonomie-prescrite}.
Specifically, the authors consider a germ of holomorphic vector field near the origin in $\mathbb{C}^2$ of the form $\dot{x}=-x(1+\dots)$ and $\dot{y}=\alpha y(1+\dots)$ with $\alpha>0$.
They show that the holonomy map $\mathrm{Hol}_\alpha$ puts such vector fields up to change of coordinates, in bijection with the set of germs of holomorphic diffeomorphisms of the form $z\mapsto e^{2\pi i \alpha}z(1+\dots)$ up to conjugation.
In particular, the classification of either class of objects is equivalent to the classification of the other.
When $\alpha$ satisfies a Diophantine condition, each side of the correspondence can be linearized, and the classification is trivial.
When $\alpha$ is rational, the classification is difficult but possible.
When $\alpha$ is not Diophantine and irrational, the classification is challenging.

More importantly, the proof in \cite{Perez-MarcoYoccoz1994_Germes-de-feuilletages-holomorphes-a-holonomie-prescrite} is independent of any properties of $\alpha$.
Furthermore, the proof goes by constructing first a smooth diffeomorphism, and then showing that the diffeomorphism can be improved to a holomorphic one.
However, the key analytic input are H\"ormander's $L^2$ estimates for the $\dbar$ operator, which are of a different flavor from our Lemma~\ref{l:flat_isometry}.

\subsection*{Acknowledgments}
Y.C. thanks Eleonora Di Nezza for organizing an Oberwolfach workshop ``Algebraic Foliations: Analytic and Birational Viewpoint", for proposing Brunella's paper~\cite{brunella} and this problem, and for fruitful discussions. She also thanks C\'ecile Gachet and Romy Merkel for helpful discussions at Oberwolfach. Y.C. was funded by the European Research Council (ERC) through the SiGMA project (Grant Agreement No. 101125012). V.T. was partially supported by NSF grant DMS-2404599, S.F. was partially supported by NSF grants DMS-2305394 and DMS-2604568.

\subsection*{Declaration on the use of AI}

This method was motivated by a project on the compactification problem for gravitational instantons undertaken by Y.C. together with Chunhui Wei. That project led us to consider a possible approach based on changing the diffeomorphism.
During subsequent discussions with ChatGPT 5.6 Sol, the AI suggested a preliminary version of Lemma \ref{l:flat_isometry}, which we then reformulated and developed into the geometric statement and argument presented here.

\section{Vertical linearization and Ueda Theory}\label{ue}

Let $C\subset S$ be a holomorphic embedding of a smooth connected compact complex curve into a smooth compact complex surface, with $C^2=0$. The normal bundle $N_{C/S}$ is topologically trivial, hence unitary flat. Thus, there exists a finite open cover $\{V_i\}$ of $C$ by coordinate charts in $S$, such that the normal bundle $N_{C/S}$ is trivialized over $\{V_i\cap C\}$ with  constant transition functions $t_{ij}\in\mathbb{C}^*$ with $|t_{ij}|=1$. Write $\Phi_i=(z_i,w_i):V_i\to\mathbb{C}^2$ for the chart maps, which we may assume satisfy $C\cap V_i=\{w_i=0\}$ and $dw_i=t_{ij} dw_j$ on $C\cap V_i\cap V_j$, by the defining property of the normal bundle.

Following Ueda \cite{Ueda}, we say that the embedding $C\subset S$ is {\em vertically linearizable} if we can find such charts with the property that on $V_i\cap V_j$ we have
\begin{equation}\label{vert_lin}
w_i=t_{ij}w_j.
\end{equation}
The reason for this name (which comes from  \cite{GS}) is the following: suppose \eqref{vert_lin} holds, then given any $(z_j,w_j)\in \Phi_j(V_i\cap V_j)$ we have
\begin{equation}
(\Phi_i\circ \Phi_j^{-1})(z_j,w_j)=(z_i(z_j,w_j), t_{ij} w_j),
\end{equation}
which has the same $w$-component as the normal bundle transition maps, but the $z$-component is $z_i(z_j,w_j)$ ($z$-component of the change of charts) instead of the coordinate chart transition map on $C$. Note though that the $z$-component does restrict to the chart transition map along $C$.

We collect here the following useful facts from the literature:
\begin{proposition}\label{ued}
Let $S$ be a compact complex surface and $C\subset S$ a smooth connected compact curve with $\deg N_{C/S}=0.$ Then the following are equivalent:
\begin{enumerate}
\item the line bundle $\mathcal O_S(C)$ admits a smooth Hermitian metric with nonnegative curvature form
\item there is an open neighborhood $C\subset U\subset S$ and a plurisubharmonic function $F:U\to\mathbb{R}\cup\{-\infty\}$ such that $F-\log|w|^2$ is locally bounded for every local defining function $w$ of $C$, and satisfies
\begin{equation}\label{ass}
dd^c F=[C]
\end{equation}
\item there is an open neighborhood $C\subset U\subset S$ such that $\mathcal O_S(C)|_U$ is unitary flat
\item the embedding $C\subset S$ is vertically linearizable
\item  there is an open neighborhood $C\subset U\subset S$ and a nonsingular holomorphic foliation $\mathcal{F}$ on $U$, such that $C$ is a compact leaf, and the holonomy of $\mathcal{F}$ along $C$ is $U(1)$-linear.
\end{enumerate}
\end{proposition}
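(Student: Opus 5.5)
We will prove the equivalences along the cycle (3)$\Leftrightarrow$(4), (4)$\Rightarrow$(2)$\Rightarrow$(1)$\Rightarrow$(2)$\Rightarrow$(4), and treat (5)$\Leftrightarrow$(4) separately. Most of the cycle is formal. For (4)$\Rightarrow$(3), the functions $w_i$ with $w_i=t_{ij}w_j$ are local defining functions of $C$ in $U=\bigcup V_i$. Hence $\mathcal O_S(C)|_U$ has constant unitary transition functions $t_{ij}$, and is therefore unitary flat. Conversely, for (3)$\Rightarrow$(4), take a flat unitary trivialization of $\mathcal O_S(C)|_U$ (shrinking $U$ to a tubular neighborhood, which retracts to $C$). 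The canonical section $s_C$ is then expressed by holomorphic functions $w_i$ with $w_i=t_{ij}w_j$ and $t_{ij}\in U(1)$ constant. These $w_i$ vanish to first order exactly on $C$, so after completing them to charts $(z_i,w_i)$ we obtain a vertical linearization.

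For (4)$\Rightarrow$(2), set $F=\log|w_i|^2$ on $V_i$. This is well defined since $|t_{ij}|=1$, it is pluriharmonic off $C$, and by Poincar\'e--Lelong $dd^cF=[C]$. The implication (2)$\Rightarrow$(1) is a gluing argument. Fix any smooth metric $h_0$ on $\mathcal O_S(C)$ and put $\phi=\log|s_C|^2_{h_0}$, so that $dd^c\phi=[C]-\theta$ with $\theta$ smooth. On $U$, the function $F-\phi$ is smooth (it is locally bounded and $dd^c(F-\phi)=\theta$ is smooth), so $e^{-(F-\phi)}h_0$ is a flat metric near $C$. Since $C^2=0$ and $S$ is compact, a neighborhood of $C$ should be glued to a metric on the complement. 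We will use a regularized maximum $\max_\epsilon\bigl(F,\ \phi+\psi\bigr)$, in the style of Ueda and Brunella, where $\psi$ is chosen so that the curvature is nonnegative away from $C$. This step is delicate when $\mathcal O_S(C)$ is not positive away from $C$. The cleanest route is Brunella's/Koike's results: they show that the pseudoflat neighborhood forces the level sets $\{F=c\}$ to be compact Levi-flat hypersurfaces. One can then take $\max_\epsilon(F,c)$ inside $U$, which is constant near the boundary of a sublevel set. The resulting metric is flat near $C$ and equal to a fixed metric of $\mathcal O_S(C)$ trivialized by $s_C$ off a neighborhood of $C$, and there the curvature is zero since $s_C$ is nonvanishing. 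The curvature of this metric is $dd^c\max_\epsilon(F,c)\ge 0$.

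For (1)$\Rightarrow$(2), which is the hard direction (Koike's theorem), let $h$ be a semipositive metric. The function $F=\log|s_C|_h^2$ is plurisubharmonic with $dd^cF=[C]+R_h$. Since $\int_S R_h\wedge R_h=C^2=0$ and $R_h\ge0$, we also have $R_h\wedge[C]=0$, so $R_h|_C=0$. The main obstacle is then to show that $R_h\equiv0$ on a neighborhood of $C$, or that $F$ can be modified to become pluriharmonic there. I would first try Koike's argument. One considers the sublevel sets $\{F<c\}$ and uses the maximum principle together with the compactness of $S$ to show that $R_h$ vanishes near $C$. The ingredients are Ueda's classification of neighborhoods, and the observation that if $R_h$ were positive somewhere near $C$ then the Ueda class would force $\mathcal O_S(C)$ to be of type $(\alpha)$, contradicting semipositivity. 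Failing that, the paper states these are ``useful facts from the literature'', so I would simply cite Ueda \cite{Ueda} and Koike \cite{koike} for (1)$\Leftrightarrow$(3).

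Finally, for (4)$\Rightarrow$(5), the level sets $\{w_i=\text{const}\}$ glue, since $w_i=t_{ij}w_j$, to a nonsingular holomorphic foliation with $C=\{w=0\}$ as a leaf. Its holonomy is $w\mapsto t_\gamma w$, which is $U(1)$-linear. For (5)$\Rightarrow$(4), choose a transversal at a point of $C$ on which the holonomy is linear, say $w\mapsto t_\gamma w$, and extend the coordinate $w$ along leaves. The extension is consistent up to the unitary constants $t_\gamma$, which gives holomorphic first integrals $w_i$ on a neighborhood with $w_i=t_{ij}w_j$. This is condition (4).
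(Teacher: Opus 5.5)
Your decomposition matches the paper's: elementary arguments for (2)$\Leftrightarrow$(3)$\Leftrightarrow$(4)$\Leftrightarrow$(5), Brunella's truncation to turn a flat metric near $C$ into a global semipositive one, and a citation of Koike for the one hard implication (1)$\Rightarrow$(3). As written, however, every step involving metrics fails because of a consistent sign error between $\log|s_C|^2_h$ and its negative.

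Take $\phi=\log|s_C|^2_{h_0}$ and $dd^c\phi=[C]-\theta$, so that $\theta=R_{h_0}$. The flat metric near $C$ is then $h=e^{F-\phi}h_0$, for which $|s_C|^2_h=e^F$ and $R_h=[C]-dd^cF=0$. Your $e^{-(F-\phi)}h_0$ instead has curvature $\theta+dd^c(F-\phi)=2\theta$.

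In the gluing step, $\max_\epsilon(F,c)$ is identically $c$ near $C$, because $F\to-\infty$ there. So it cannot encode a smooth metric on $\mathcal O_S(C)$ in any convention. The correct choice is $\log|s_C|^2_h:=\min_\epsilon(F,c)$, with $c+1<\min_{\partial K}F$ for a compact neighborhood $K\subset U$ of $C$. This choice works for three reasons:
\begin{itemize}
\item it equals $F$ near $C$, so $h$ is smooth and flat there;
\item it equals $c$ near $\partial K$, so it extends by a constant;
\item it is plurisuperharmonic off $C$ because $F$ is pluriharmonic there, so the curvature $[C]-dd^c\min_\epsilon(F,c)$ is $\geq 0$.
\end{itemize}
This is exactly the paper's concave $\varphi\circ F$. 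You also do not need Brunella or Koike for the compactness of the level sets $\{F=c\}$. It follows at once from $F-\log|w|^2$ being locally bounded and $C$ being compact.

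The same sign error undermines your sketch of (1)$\Rightarrow$(2). Poincar\'e--Lelong gives $dd^c\log|s_C|^2_h=[C]-R_h$, so $\log|s_C|^2_h$ is plurisuperharmonic off $C$, not plurisubharmonic.

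The target you set, $R_h\equiv0$ near $C$ for the given $h$, is also false in general. When (3) holds, set $\log|s_C|^2_h:=\sum_k a_k\min_\epsilon(F,c_k)$ with $c_k\to-\infty$ and rapidly decaying weights $a_k$ summing to $1$. This defines a smooth semipositive metric whose curvature is nonzero along level sets accumulating at $C$. Koike's theorem yields \emph{some} flat metric near $C$, not flatness of a given one, and your type~$(\alpha)$ heuristic is not an argument.

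This direction therefore rests entirely on citing Koike \cite{koike} (Corollary~1.5), which is exactly what the paper does. With that citation and the sign corrections above, your proof goes through.
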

The compactness of $S$ is used only in the implication (1)$\Rightarrow$(3). It is conjectured in \cite{KO} that this should hold even when $S$ is noncompact.
\begin{proof}

 (1)$\Rightarrow$ (3): This follows from \cite[Corollary 1.5]{koike}, generalizing  earlier work of Brunella \cite{brunella}.

 (3)$\Rightarrow$ (1): This is elementary \cite[p.442]{brunella}, see the end the proof of Theorem \ref{thm:dps-semipositivity} in Section \ref{sek}.

(2)$\Leftrightarrow$ (3): we recall here the simple argument. First assume that (2) holds.  Fix a smooth Hermitian metric $h_0$ on $\mathcal O_S(C)$, a defining section $s$, and combining the Poincar\'e-Lelong formula with our assumption \eqref{ass}, we see that on $U$ we have
\begin{equation}
dd^c(\log|s|^2_{h_0}-F)=-R_{h_0},
\end{equation}
so by elliptic regularity $G:=\log|s|^2_{h_0}-F$ is smooth on $U$. Hence the smooth Hermitian metric $h:=h_0e^{-G}$ on $\mathcal O_S(C)|_U$ has curvature identically zero, and so (3) holds.

Conversely, if (3) holds then there is a smooth Hermitian metric $h$ on $\mathcal O_S(C)|_U$ with curvature identically zero $R_h\equiv 0$. Defining then
\begin{equation}
F:=\log|s|^2_h,
\end{equation}
we obtain a function $F:U\to\mathbb{R}\cup\{-\infty\}$, which is smooth on $U\backslash C$ and comparable to $\log\mathrm{dist}(\cdot,C)$ near $C$, and using the Poincar\'e-Lelong equation, on $U$ we have
\begin{equation}
dd^c F=[C]-R_h=[C],
\end{equation}
as desired.

(3)$\Leftrightarrow$ (4)$\Leftrightarrow$(5): these are due to Ueda \cite[p.596]{Ueda}. To see that (3)$\Rightarrow$(4),  note that on an open cover $\{V_i\}$ as above, the transition functions of $\mathcal O_S(C)$ are given by $\frac{w_i}{w_j}$. If (3) holds, then we can find such a cover such that on $V_i\cap V_j$ we have $\frac{w_i}{w_j}=s_{ij},$ for some $s_{ij}\in\mathbb{C}^*$ with $|s_{ij}|=1$. But on $C\cap V_i\cap V_j$ we have $dw_i=t_{ij}dw_j$, and so $s_{ij}=t_{ij}$ and \eqref{vert_lin} holds. The converse implication (4)$\Rightarrow$(3) is analogous. To see why (4)$\Rightarrow$(5), we assume \eqref{vert_lin}, and observe that the line spanned by the vector field $\frac{\partial}{\partial z_i}$ agrees on $V_i\cap V_j$ with the line spanned by $\frac{\partial}{\partial z_j}$. These thus define a nonsingular holomorphic foliation $\mathcal{F}$ defined on $U$, whose leaves are given locally by $\{w_i={\rm const}\}$, so in particular $C$ itself is a compact leaf. The holonomy of $\mathcal{F}$ along a path contained in $C$ can be viewed as an element of $\mathrm{Bihol}(\mathbb{C},0)$ (after choosing transversal holomorphic discs at the beginning and end points), and all of these elements must be given by multiplication by an element of $U(1)$, because of \eqref{vert_lin}. 
Conversely, assume (5). Choose a base point of $C$ and a transverse
coordinate in which the entire holonomy representation is
$U(1)$-linear. Transport this coordinate along paths in $C$ to
obtain transverse coordinates in a sufficiently fine collection
of flow boxes. On overlaps, the two choices differ by the holonomy
of a loop based at the chosen point, hence by multiplication by a
unit complex number. After shrinking the flow boxes, their
transverse coordinates therefore satisfy
\[
    w_i=s_{ij}w_j,\qquad |s_{ij}|=1,
\]
and arguing as above we see that $s_{ij}=t_{ij}$, and so (4) holds.
\end{proof}

\section{Geometry of cylindrical model space}\label{sec--set-up}
Let $M$ be a complex surface and $D$ be a smooth connected anticanonical elliptic curve. Denote by $J_M$ the complex structure on $M$ and  denote 
$$X:=M\setminus D.$$
Fix a nowhere vanishing holomorphic 1-form $\Omega_D$ on $D$. There is a meromorphic $2$-form $\Omega_M$ on $M$ which is holomorphic on $X$ and with a simple pole along $D$, such that the residue of $\Omega_M$ along $D$ is given by $\Omega_D$. Denote by $$\omega_D=\sqrt{-1}\Omega_D\wedge\bar\Omega_D$$
the flat K\"ahler form. 
Let $v$ be the smooth $(1,0)$ vector field on $D$ defined by $\Omega_D(v)=1.$

Let $L$ be the holomorphic normal bundle of $D$, and denote by $J_L$ the natural complex structure on the total space of $L$.
We have the natural projection map
$$\pi: L\rightarrow D.$$
 Let ${\bf 0}_L$ be the zero section of $L$ and denote
$$L^\times :=L\setminus {\bf 0}_L.$$

Choose a flat Hermitian metric $h$ on $L$ and denote by $\nabla^L$ the induced flat Chern connection on $L$. Then we have a splitting
$$T^{1, 0}L=H\oplus V^{1, 0}L,\quad V^{1,0}L\simeq \pi^*L.$$
Since $\nabla^L$ is flat the splitting is in particular holomorphic, and $H$ can be identified with $\pi^*T^{1, 0}D$. This way, we can view $v$ as a parallel $(1,0)$ vector field on $L$.

Let $\xi$ be the Euler holomorphic vector field on $L$  which generates the natural $\mathbb C^*$ action. Let $\Theta$ be the holomorphic 1-form on $L^\times$ with $\Theta(\xi)=1$ and $\Theta|_H=0$. If we choose a local parallel unit frame $e$ for $L$, then we have fiber holomorphic coordinate $w$ so that $\zeta=w e_{\pi(\zeta)}$ for $\zeta\in L$. Then we can write 
$$\xi=w \p_w, \ \ \ \Theta=w^{-1}dw.$$
Let $\vartheta: = \pi^*\Omega_D$. There is a natural holomorphic 2-form $\Omega_0$ on $L^\times$  given by 
$$\Omega_0=\Theta\wedge \vartheta.$$
 On $L^\times$ we have a natural function pluriharmonic with respect to $J_L$, given by 
$$z:=-\log |\zeta|_h^2,$$
and  a flat cylindrical K\"ahler form and metric
$$\omega=\pi^*\omega_D+ dd^c_{J_L}z^2, \quad g=|\vartheta|^2+dz^2+(J_Ldz)^2,$$ and denote its connection by $\nabla$. 
The corresponding metric $g$ splits isometrically as a product metric on $\mathbb T^3\times \mathbb R$, where $z$ is naturally the coordinate on $\mathbb R$. 

The standard tubular neighborhood theorem allows us to identify a neighborhood of $D$ in $M$ diffeomorphically with a neighborhood $\mathcal N$ of the zero section ${\bf 0}_L$ in $L$, and then we may view $J_M$ as a complex structure on $\mathcal N$. This identification is not unique, and our goal is to find a good identification such that $z$ is pluriharmonic with respect to $J_M$. This is achieved in the next two sections.

\section{The first diffeomorphism}

 For $\rho>0$ we denote by $$\mathcal N_\rho:=\{\zeta\in L: |\zeta|_h\leq \rho\}.$$

\begin{proposition}\label{p:vertical_disc}
	We may smoothly identify a neighborhood of $D$ in $M$ with $\mathcal N_\rho$ for some $\rho>0$, such that for each $p\in D$, $\mathcal N_\rho\cap L_p$ is a $J_M$-holomorphic disk and such that for all $k\geq 0$,
	\begin{equation}\label{e:volume form close}|\nabla^k(\Omega_M-\Omega_0)|_g=O(e^{-\frac{1}{2}z}). \end{equation}
\end{proposition}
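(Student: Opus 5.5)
Choose the identification $\Phi$ in two steps: first make the fibers holomorphic, then fix the first-order behaviour along $D$. The decay $O(e^{-\frac12 z})=O(|\zeta|_h)$ then follows from a Taylor expansion, once it is checked in the cylindrical metric $g$.

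\emph{Holomorphic transversal discs.} Take a smooth family of $J_M$-holomorphic discs $\{\Delta_p\}_{p\in D}$, transverse to $D$ with $\Delta_p\cap D=\{p\}$. One concrete construction: cover $D$ by charts $(z_i,w_i)$ of $M$ with $D=\{w_i=0\}$, take in each chart the discs $\{z_i=\text{const}\}$, and glue with a partition of unity on $D$. The gluing can be done on the space of $1$-jets of holomorphic discs at $p$. Alternatively, fix a smooth complement to $TD$ in $TM|_D$ and take the holomorphic discs tangent to it, which exist and depend smoothly on $p$ by standard disc-filling in dimension two (a linear $\dbar$-problem perturbed slightly).

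The tangent line $T_p\Delta_p$ projects isomorphically onto $L_p=N_{D/M,p}$. Parametrize $\Delta_p$ holomorphically by a neighborhood of $0$ in $L_p$ with derivative at $0$ equal to this projection. Each disc is a one-dimensional complex manifold with a marked point and a marked tangent direction, so the uniformization can be chosen smoothly in $p$; concretely, use the restriction of a fixed holomorphic normal coordinate, suitably normalized. This gives a diffeomorphism $\Phi$ from $\mathcal N_\rho$ onto a neighborhood of $D$ in $M$ with three properties: $\Phi|_{{\bf 0}_L}=\mathrm{id}_D$, $d\Phi$ along ${\bf 0}_L$ induces the identity on normal bundles, and each $\mathcal N_\rho\cap L_p$ is mapped holomorphically.

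\emph{Decay estimate.} Compare $\Omega_M$ and $\Omega_0$ in local coordinates $(x,w)$ on $L$, with $x$ a flat coordinate on $D$ ($\Omega_D=dx$) and $w$ the fiber coordinate. We have $\Omega_0=w^{-1}dw\wedge dx$. Pulled back, $\Phi^*\Omega_M=f\,w^{-1}\,dw\wedge dx+(\text{other terms})$, where the other terms are smooth forms with a simple pole. These smooth forms are not of type $(2,0)$ for $J_L$, because $J_M\ne J_L$ away from ${\bf 0}_L$. Since $\Phi$ induces the identity on $D$ and on the normal bundle, $J_M-J_L=O(|w|)$ near ${\bf 0}_L$. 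The residue condition $\mathrm{Res}_D\Omega_M=\Omega_D$ gives $f|_{w=0}=1$. Hence
\[
\Phi^*\Omega_M-\Omega_0=w^{-1}\cdot(\text{smooth form vanishing at }w=0)=\text{smooth form}.
\]

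To see this carefully, write $\Omega_M=w_M^{-1}\,dw_M\wedge\alpha$ with $w_M$ a local defining function for $D$ and $\alpha|_D=\Omega_D$. Then pull back, noting $\Phi^*w_M=w\,u$ with $u$ smooth, $u|_{w=0}=1$, and $u$ holomorphic in $w$ on fibers.

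It remains to convert this to $g$-norms. The orthonormal coframe of $g$ is $dx$, $w^{-1}dw$ and their conjugates, so $|dw|_g=|w|$. A smooth form that is $O(1)$ in the Euclidean sense therefore has $g$-norm $O(1)$ in general. The difference must instead be shown to be $O(|w|)$ in the $g$-norm. The leading term is $w^{-1}(f-1)\,dw\wedge dx$, with $g$-norm $|f-1|=O(|w|)$. Every other term contains either a factor $d\bar w$ or $dw$ with a smooth, non-singular coefficient, and each such factor contributes one $|w|$ to the $g$-norm. The remaining terms are $w^{-1}d\bar w\wedge(\cdot)$ with coefficients $O(|w|)$, and $dx\wedge d\bar x$ terms; the latter must carry coefficient $O(|w|)$, which is ensured by the residue normalization and $\alpha|_D=\Omega_D$.

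Higher derivatives $\nabla^k$ in the cylindrical metric correspond to applying $w\partial_w$, $\bar w\partial_{\bar w}$ and $\partial_x$, which preserve the class of functions that are $O(|w|)$ and smooth in $(x,w)$. This gives \eqref{e:volume form close}, since $|w|=e^{-z/2}$.

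\emph{Main obstacle.} The hardest step is constructing the smooth family of holomorphic discs with a holomorphic parametrization whose $1$-jet along $D$ is exactly the identity. This is needed to kill the zeroth-order part of the error in the cylindrical norm, in particular the $dx\wedge d\bar x$ and $dx\wedge d\bar w$ components. First I would use local charts, then patch the $1$-jets. Any $O(|w|)$ mismatch from patching is harmless, because only the value along $D$ matters.
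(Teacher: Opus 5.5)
Your proof is correct in outline, but it is organized differently from the paper's, and your option (a) for building the discs does not work as written.

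\textbf{Comparison with the paper.} The paper first takes the normal exponential map $\Xi$, whose differential along ${\bf 0}_L$ is the canonical identification, and proves $\Omega_M-\Omega_0=O_{\mathrm{cyl}}(r)$ for it. Because $\Xi$ is not fiberwise holomorphic, $W\circ\Xi$ may contain $\bar w^2$ or $w\bar w$ terms. Non-smooth coefficients such as $\bar w^2/w$ then appear, so the paper must work in the class $O_{\mathrm{cyl}}$, defined by derivatives along $\partial_{z_0},\partial_{\bar z_0},w\partial_w,\bar w\partial_{\bar w}$. Only afterwards does it make the fibers holomorphic. It does this by a fiberwise implicit function theorem for the $\bar\partial$-operator of $\Phi_Y|_{\Delta_{p,\rho}}$, with a normalized Cauchy operator forcing $Y=O(|\zeta|^2)$. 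It then asserts that this second-order correction preserves the estimate.

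You reverse the order: you build a fiberwise-holomorphic identification with the correct $1$-jet first and estimate afterwards. This makes the estimate cleaner. Holomorphy in $w$ makes $\Phi^*w_M=wu$ and the coefficients of $\Phi^*\alpha$ smooth and holomorphic in $w$. Hence every coefficient of $\Phi^*\Omega_M-\Omega_0$ in the parallel coframe $dx,d\bar x,\Theta,\bar\Theta$ is smooth and vanishes on ${\bf 0}_L$, and all $\nabla^k$ bounds follow at once. Your argument also shows the estimate depends only on the $1$-jet along $D$ and on fiberwise holomorphy, which is exactly why the paper's $O(|\zeta|^2)$ correction is harmless.

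Two small corrections to your estimate:
\begin{itemize}
\item There are no $d\bar w$ terms at all, since $\iota_{\bar\xi}\Phi^*\Omega_M=0$.
\item The vanishing of the $dx\wedge d\bar x$ coefficient on $D$ comes from $u|_{w=0}\equiv1$ and $\Phi|_{{\bf 0}_L}=\mathrm{id}$. The residue normalization is what makes the $\Theta\wedge dx$ coefficient tend to $1$.
\end{itemize}

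\textbf{The construction you leave as a sketch.} This is what the paper's ordering buys: its implicit function theorem produces the discs together with their holomorphic parametrizations, uniformly in $p$.

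Your option (a) fails as stated. Gluing $1$-jets with a partition of unity gives only a smooth field of transverse complex lines, not discs. Averaging discs coming from different charts destroys holomorphicity. Your remark that an $O(|w|)$ mismatch is harmless misses the issue: each $\Delta_p$ must be a single, exactly holomorphic disc depending smoothly on $p$. The same problem affects ``a fixed holomorphic normal coordinate'', which exists only locally.

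You can fix this in either of two ways:
\begin{itemize}
\item Use your option (b), formulated for the parametrizing maps $\psi_p\colon L_p\cap\mathcal N_\rho\to M$ rather than for unparametrized discs. This is essentially the paper's Step 2.
\item Patch the maps $\psi_p$ inductively, one chart at a time. This works because, in the coordinates of a single chart, holomorphic maps with a prescribed $1$-jet at $0$ form a convex set.
\end{itemize}
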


\begin{proof}
\noindent \textbf{Step 1}: Fix a smooth Hermitian metric $g_M$ in a neighborhood of $D$. We first apply the natural normal exponential map to get a local diffeomorphism. For any $\zeta\in L\simeq T^{1,0}M|_D/T^{1,0}D$, choose a unique tangent vector representative $V\in T_{\pi(\zeta)}^{1,0}M$ in the orthogonal complement of $T_{\pi(\zeta)}^{1,0}D$ with respect to $g_M$. Define the normal exponential $$\Xi(\zeta)=\exp_{\pi(\zeta),g_M}(2\mathrm{Re} \,V).$$
The restriction $d\Xi|_D:TL|_{\mathbf 0_L}\simeq TD\oplus L\longrightarrow TM|_D$ is given by the above identification. Thus for $|\zeta|_h<\epsilon$, with $\epsilon>0$ sufficiently small, we obtain a diffeomorphism $\Xi:\mathcal N_\epsilon \longrightarrow U\subset M$ onto some open neighborhood $U$ of $D$, and restricting it to $\mathcal{N}_\epsilon\cap L^\times$ we get a diffeomorphism with $U\setminus D$. 
From now on we will naturally identify $U\setminus D$ with $\{z>R_0\}\subset L^\times$ through $\Xi$ and omit $\Xi^*$. 

Such exponential type diffeomorphism gives the estimate \eqref{e:volume form close}, as shown e.g. in \cite[Proposition 3.4]{HSVZ}. One can also prove the closeness of complex structure using the integrability of $J_M$ as in \cite[lemma 2.6]{JZ24}. For the reader's convenience, we provide some details.

We work in local coordinates, let $e$ be a parallel local unitary frame of
$L$, let $w$ be its fiber coordinate, and choose a holomorphic coordinate $z_0$ on the base with $\Omega_D=dz_0$.  Choose holomorphic coordinates $(Z,W)$ on $M$ such
that $D=\{W=0\}$, $Z|_D=z_0$, and $\partial/\partial W|_D$ is identified with $e$.
The residue normalization gives
\[
        \Omega_M=(1+W a(Z,W))\,\frac{dW}{W}\wedge dZ
\]
for a holomorphic function $a$. The map $\Xi$ fixes the zero section, its differential is complex
linear there, and it induces the identity on the normal quotient.  In
the coordinates $(z_0,w)$ this gives
\[
 \begin{aligned}
        Z\circ\Xi&=z_0+b(z_0,\bar z_0)w+O(|w|^2),\\
        W\circ\Xi&=w+O(|w|^2),
 \end{aligned}
\]
where the remainders are smooth with all derivatives.  Put $r:=|w|$.
We write $A=O_{\mathrm{cyl}}(r^\nu)$ if the coefficients of $A$ in the
logarithmic coframe
\[
        dz_0,\quad d\bar z_0,\quad \Theta=\frac{dw}{w},
        \quad \bar\Theta=\frac{d\bar w}{\bar w},
\]
and all their derivatives under
$\partial_{z_0},\partial_{\bar z_0},w\partial_w,$ and
$\bar w\partial_{\bar w}$ are $O(r^\nu)$.  A smooth coefficient which
vanishes to ordinary order $\nu$ in $(w,\bar w)$ is
$O_{\mathrm{cyl}}(r^\nu)$, as follows directly from its Taylor
expansion.

Direct substitution now shows that
\[
        \Omega_M-\Omega_0
        =O_{\mathrm{cyl}}(r).
\]
Indeed, the possible contribution of the linear term $bw$ is
proportional to $\Theta\wedge dw=0$, and every remaining contribution
contains at least one additional factor of $w$ or $\bar w$.  The
Levi--Civita connection of the flat cylindrical metric has bounded
coefficients in our coframe, so the same estimate holds
after every covariant derivative.  Therefore, for every $k\geq0$,
\[
        |\nabla^k(\Omega_M-\Omega_0)|_g
        =O(r)=O(e^{-z/2}).
\]

\noindent \textbf{Step 2:} To make the fibers $L_p$ holomorphic with respect to $J_M$, we need to apply a further implicit function theorem fiberwise. This is a standard practice in the theory of pseudo-holomorphic curves. For the convenience of readers we include the details here. 

 For a vector field $Y$ defined in a neighborhood of ${\bf 0}_L$, we denote $$\Phi_Y(\zeta)=\exp_{\zeta,g_M}(Y_\zeta).$$

For $Y$ sufficiently small in $C^1$, $\Phi_Y$ is a local diffeomorphism in a neighborhood of ${\bf 0}_L$. To preserve \eqref{e:volume form close} we need $Y=O(|\zeta|^2)$ together with the corresponding derivative bounds.

For $p\in D$ and $\rho>0$ small, we denote the disc $$\Delta_{p, \rho}:=L_{p}\cap \mathcal N_\rho$$
and consider 
\begin{equation}\label{CR}\mathcal F_{p, \rho}(Y):=\frac{1}{2}\left(d\Phi_Y|_{T\Delta_{p,\rho}}+J_M\circ d\Phi_Y\circ J_L|_{T\Delta_{p,\rho}}\right).
\end{equation}
Then 
$\mathcal F_{p, \rho}(Y)$ vanishes precisely when $\Delta_{p, \rho}$ is $\Phi_Y^*J_M$ holomorphic. To make precise the implicit function theorem argument, we wish to identify $TL$ with $\pi^*E$ for a vector bundle $E$ on $D$. To do this, we use the Chern connection of $g_M$ on $(T\mathcal N,J_M)$, let
\[
    I_q:T_q\mathcal N\longrightarrow E_p,
    \qquad q\in\Delta_{p,\rho},
\]
denote radial parallel transport, and let
\[
    P_{Y,q}:T_{\Phi_Y(q)}\mathcal N\longrightarrow T_q\mathcal N
\]
denote parallel transport backwards along
$t\mapsto\exp_q(tY_q)$. In \eqref{CR} we then implicitly postcompose with
$I_q\circ P_{Y,q}$.
These complex-linear identifications make the operator a smooth map
into the fixed space of $E_p$-valued $(0,1)$-forms. We suppress them
in the notation.

Let $C_\rho(f)$ be the normalized Cauchy operator on standard disc $\Delta_\rho\subset \mathbb C$, more precisely
$$\mathcal C_\rho: C^{k, \mu}(\overline \Delta_\rho )\rightarrow C^{k+1, \mu}(\overline\Delta_\rho);\quad f\mapsto T_\rho f-T_\rho f(0)-\zeta \cdot \p_{\zeta}T_\rho f(0),$$
with 
$$T_\rho f(\zeta):= \frac{\sqrt{-1}}{2\pi}\int_{\Delta_\rho} \frac{f(w)}{\zeta-w}dwd\bar w. $$
Then we have $\p_{\bar\zeta}\mathcal C_\rho f=f$ and $\mathcal C_\rho f(0)= \p_{\zeta}\mathcal C_\rho f(0)=0$.
Define $\mathcal C_{p,\rho}$ on $C^{k,\mu} (\overline\Delta_{p,\rho}; \Lambda^{0,1}T^*\Delta_{p,\rho} \otimes E_p)$ by 
$$\mathcal C_{p,\rho}(f d\bar\zeta\otimes \eta):=\mathcal C_\rho(f)\otimes \eta.$$ 

Consider the Banach spaces
$$\mathfrak B_{p,\rho}:= \{\beta\in C^{k,\mu} (\overline\Delta_{p,\rho}; \Lambda^{0,1}T^*\Delta_{p,\rho} \otimes E_p) \,|\,\beta(0)=0\}$$ 
with its image
$$ \mathfrak X_{p,\rho}:= \mathcal C_{p,\rho}(\mathfrak B_{p,\rho}) \subset \mathfrak B'_{p,\rho}: = C^{k+1,\mu}(\overline\Delta_{p,\rho};E_p).$$ 
The norm is given by the rescaled pull back to the disc. 
$$\|\beta\|_{k,\mu;p, \rho}:=\|\lambda_{p, \rho}^*\beta\|_{C^{k, \mu}(\Delta)},$$ where $\lambda_{p, \rho}:\Delta\rightarrow \Delta_{p, \rho}; \zeta\mapsto \rho \zeta e_p$, for any unit vector $e_p\in L_p$.
Thus $\mathcal C_{p, \rho}$ is an isomorphism from $\mathfrak B_{p,\rho}$ to $\mathfrak X_{p,\rho}$ and is the right inverse of
$$\bp: \mathfrak X_{p,\rho}\rightarrow \mathfrak B_{p,\rho}.$$

The map $\mathcal F_{p, \rho}$ can be viewed as a  $C^1$ map from a ball $B_\epsilon(0)\subset \mathfrak X$ into $\mathfrak B$. The linearization at $0$ is given by 
$$D\mathcal F_{p, \rho}|_0=\bp+K_{p, \rho},  $$
where the perturbation $K_{p, \rho}$ satisfies the operator norm bound
$$\|K_{p, \rho}\mathcal C_{p, \rho}\|\leq C\rho$$ 
Consequently,
\[
D\mathcal F_{p,\rho}(0)\mathcal C_{p,\rho}
=
I+K_{p,\rho}\mathcal C_{p,\rho}.
\]
For $\rho$ sufficiently small this operator is invertible, and
\[
D\mathcal F_{p,\rho}(0)^{-1}
=
\mathcal C_{p,\rho}
\left(I+K_{p,\rho}\mathcal C_{p,\rho}\right)^{-1}.
\]
It is straightforward to compute that 
$$\|\mathcal F_{p, \rho}(0)\|_{\mathfrak B}\leq C\rho^2,$$
$$\|\mathcal C_{p, \rho}(\mathcal F_{p, \rho}(0))\|_{\mathfrak B'}\leq C\rho^2$$
Applying the standard quantitative implicit function theorem, for $\rho>0$ sufficiently small we can find  $Y_{p, \rho}$ with $|Y_{p, \rho}|=O(|\zeta|^2)$ such that $\mathcal F_{p, \rho}(Y_{\rho,p})=0$ on $\Delta_{p, \rho}$ for all $p\in D$.  It is clear that the procedure can be made uniform in $p\in D$. 
\end{proof}

\section{The second diffeomorphism}
We now work under the identification given in Proposition \ref{p:vertical_disc}. 
We say a complex-valued differential form $\eta$ defined on an open subset of $L^\times$ is \emph{horizontal} if
$\iota_\xi\eta=\iota_{\bar\xi}\eta=0$.  
For a horizontal form $\eta$  we define
\[
 d_H\eta
 :=d\eta-\Theta\wedge\mathcal L_\xi\eta
          -\bar\Theta\wedge\mathcal L_{\bar\xi}\eta.
\]
Notice that for $m\geq 1$, we can lift elements in $\Omega^q(D, L^{-m}),\, q\geq 0$, as horizontal forms on $L^\times$ which are homogeneous of degree $m$ in the fiber direction.
\begin{lemma}
\label{lem:horizontal-differential-connection}
Let $\widehat\sigma$ be the fiberwise-homogeneous horizontal
lift of $\sigma\in\Omega^q(D,L^{-m})$. Then
\begin{equation}
        d_H\widehat\sigma
        =\widehat{d_{\nabla_{L^{-m}}}\sigma}.
 \label{eq:dH-degree-m-connection}
\end{equation}
\end{lemma}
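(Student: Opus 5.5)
The identity is local on $D$ and both sides are $\mathbb C$-linear in $\sigma$, so I will verify it in a local trivialization. Over an open set of $D$, pick a parallel unit frame $e$ of $L$ with fiber coordinate $w$, so that $\zeta=we_{\pi(\zeta)}$. The dual frame $e^{-m}$ of $L^{-m}$ is then also parallel for $\nabla_{L^{-m}}$, because the metric is flat and unitary. Write $\sigma=s\otimes e^{-m}$ with $s$ a scalar $q$-form on $D$. Evaluating $\sigma$ against $\zeta^{\otimes m}=w^m e^{m}$ gives the homogeneous lift
\[
\widehat\sigma=w^m\,\pi^*s .
\]
The convention for ``homogeneous of degree $m$'' (holomorphic $w^m$ rather than $|w|$-dependence) is the one forced by pairing $L^{-m}$ with $\zeta^{\otimes m}$. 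I will fix it at the outset and note that the computation below works the same way for any such convention.

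Next I check that this local formula does not depend on the chosen frame. Changing the parallel frame replaces $e$ by $ce$ with $|c|=1$ locally constant. Then $w\mapsto c^{-1}w$ and $s\mapsto c^{m}s$, so $w^m\pi^*s$ is unchanged. The form $\pi^*s$ contains no $dw$ or $d\bar w$, so $\widehat\sigma$ is horizontal, which means $\iota_\xi\widehat\sigma=\iota_{\bar\xi}\widehat\sigma=0$ since $\xi=w\partial_w$.

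Now I compute the three terms of $d_H\widehat\sigma$.
\begin{enumerate}
\item[(i)] $d\widehat\sigma=m w^{m}\,\frac{dw}{w}\wedge\pi^*s+w^m\pi^*ds=m\,\Theta\wedge\widehat\sigma+w^m\pi^*ds$.
\item[(ii)] By Cartan's formula, $\mathcal L_\xi\widehat\sigma=\iota_\xi d\widehat\sigma$, since $\iota_\xi\widehat\sigma=0$. Using $\iota_\xi\Theta=1$ and the fact that $\pi^*s$ and $\pi^*ds$ are killed by $\iota_\xi$, this gives $\mathcal L_\xi\widehat\sigma=m\widehat\sigma$.
\item[(iii)] In the same way, $\mathcal L_{\bar\xi}\widehat\sigma=0$, because $\bar w\partial_{\bar w}(w^m)=0$ and $\iota_{\bar\xi}\Theta=0$.
\end{enumerate}
Substituting these into the definition of $d_H$ gives
\[
d_H\widehat\sigma=m\Theta\wedge\widehat\sigma+w^m\pi^*ds-m\Theta\wedge\widehat\sigma=w^m\pi^*ds.
\]
On the other side, $e^{-m}$ is parallel, so $d_{\nabla_{L^{-m}}}\sigma=ds\otimes e^{-m}$, and its lift is exactly $w^m\pi^*ds$. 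The two sides agree.

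The main point needing care is not the computation but the bookkeeping. One must confirm that the horizontal distribution $H$ used to define $\Theta$ and the lift is the flat one coming from $\nabla^L$. That is what makes $\pi^*s$ horizontal in the local coordinates and $\Theta=dw/w$ exactly. One must also confirm that parallel frames exist because $\nabla^L$ is flat, since this is what removes any connection-form term from $d\widehat\sigma$. If a non-parallel frame were used, an extra term $-m\,w^m A\wedge\pi^*s$ would appear on both sides, with $\nabla e=A\otimes e$. These terms cancel, so the result is frame-independent in any case.
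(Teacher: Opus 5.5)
Your proof is correct and is essentially the paper's argument. You work in a local parallel unitary frame, write $\widehat\sigma=w^m\pi^*s$, use $d(w^m)=mw^m\Theta$ to get $d\widehat\sigma=m\Theta\wedge\widehat\sigma+w^m\pi^*ds$ with $\mathcal L_\xi\widehat\sigma=m\widehat\sigma$ and $\mathcal L_{\bar\xi}\widehat\sigma=0$, and note that $d_{\nabla_{L^{-m}}}$ is the ordinary exterior derivative in the parallel frame $e^{-m}$. Your extra checks (frame independence, the Cartan-formula derivation of the Lie derivatives, the non-parallel-frame remark) are accurate; the one small imprecision is that horizontality of $\pi^*s$ is automatic because $\xi,\bar\xi$ are vertical, and the flat choice of $H$ is needed only to get $\Theta=dw/w$.
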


\begin{proof}
Choose a local parallel unitary frame
$e$ of $L$.  Writing
$\sigma=\sigma_0\otimes e^{-m}$, its homogeneous lift is
\begin{equation}\label{eq-horizontal lift}
\widehat\sigma(p,w)=w^m\sigma_0(p).
\end{equation}
Since $\Theta=dw/w$, one has
$
 d(w^m)=m w^m\Theta.
$
Then by \eqref{eq-horizontal lift}, we obtain
\[
 \mathcal L_\xi\widehat\sigma=m\widehat\sigma,
 \qquad
 \mathcal L_{\bar\xi}\widehat\sigma=0.
\]
The induced flat connection on $L^{-m}$ is trivial in the frame
$e^{-m}$; hence the ordinary derivative of $\sigma_0$ is the coefficient
of $d_{\nabla_{L^{-m}}}\sigma$.  Therefore we have
\[
 d\widehat\sigma
 =m\Theta\wedge\widehat\sigma
  +\widehat{d_{\nabla_{L^{-m}}}\sigma}.
\]
The definition of $d_H$ therefore subtracts the first term, while the
$\bar\Theta$-term vanishes, and gives
\eqref{eq:dH-degree-m-connection}.
\end{proof}

Given Proposition \ref{p:vertical_disc}, the fact that each fiber is holomorphic in $J_M$ implies that 
$$\iota_{\bar\xi}\Omega_M=0. $$
Define
$$\alpha:=\iota_{\xi}\Omega_M, \qquad \beta:=\Omega_M-\Theta\wedge \alpha. $$
Then both $\alpha$ and $\beta$ are horizontal.  
Since $d\Omega_M=0$ and $d\Theta=0$, we have $\mathcal{L}_{\bar\xi}\Omega_M=0$ and  
\begin{equation}
\mathcal L_{\bar\xi}\alpha =\mathcal L_{\bar\xi}\beta=0,
 \qquad d_H\beta=0,
 \qquad \mathcal L_\xi\beta=d_H\alpha.
 \label{eq:closed-horizontal-identities}
\end{equation}
Notice that the function $z=-\log |\zeta|_h^2$ is $J_M$-pluriharmonic if $\Theta$ is of type $(1,0)$ with respect to $J_M$. The latter condition holds  precisely when $\Theta\wedge \Omega_M=0$, or equivalently $\beta=0$. 

We seek a fiber-preserving diffeomorphism that is holomorphic on
each fiber and eliminates the horizontal component $\beta$.
For $T$ sufficiently large, set
\[
    \mathcal U_T:=\{\zeta\in L^\times:z(\zeta)\geq T\}.
\]
For a complex-valued function $f$ on $\mathcal U_T$, sufficiently
small in $C^1$ with respect to the cylindrical metric $g$, define
\[
    \Phi_f:\mathcal U_T\longrightarrow\mathcal U_{T-1},
    \qquad
    \Phi_f(\zeta):=e^{f(\zeta)}\zeta.
\]
Indeed,
\[
    z\circ\Phi_f=z-2\operatorname{Re}f,
\]
so $\Phi_f(\mathcal U_T)\subset\mathcal U_{T-1}$ whenever
$\|\operatorname{Re}f\|_{L^\infty}<\tfrac12$.
Moreover we have
\[
        (\Phi_f)_*\bar\xi
        =(\mathcal L_{\bar\xi} f)\xi
         +(1+\mathcal L_{\bar\xi}\bar f)\bar\xi.
\]
Therefore if $f$ is holomorphic on each fiber, then $\Phi_f$ will preserve the holomorphicity of each fiber with respect to $\Omega_M$.

\subsection{The key Lemma}
 Let $\bL$ be a general holomorphic line bundle on $D$ endowed with a flat Hermitian metric. The Chern connection on $\bL$ can be decomposed as 
$$\nabla_{\bL}=\p_{\bL}+\bp_{\bL}. $$
Denote the Dolbeault Laplacian by
$$\Delta_{\bL}: =\bp_{\bL}\bp_{\bL}^*+\bp_{\bL}^*\bp_{\bL}.$$
We have natural Sobolev norms defined on $\Omega^{p, q}(D; \mathbb L)$:
$$\|s\|_{H^k}^2=\int_{D}\sum_{j=0}^k |\nabla_{\mathbb L}^js|^2\omega_D$$
For $q=0,1$, let
$$\mathcal H^{0,q}_{\mathbb L}:=\ker\bigl(\Delta_{\mathbb L}|_{\Omega^{0,q}(D,\mathbb L)}\bigr),$$
and let $\Pi^{0,q}_{\mathbb L}$ be the $L^2$-orthogonal projection onto $\mathcal H^{0,q}_{\mathbb L}$.
\begin{lemma}\label{l:flat_isometry}
For every $u\in \Omega^{0, 1}(D,\bL)$, there exists a unique $f\in \Omega^{0}(D,\bL)$ satisfying 
\[
    \bar\partial_{\bL} f = \iota_v\partial_{\bL}u, \quad f\perp_{L^2}\mathcal H^{0,0}_{\bL}.
\]
Moreover, for every $k\geq 0$,
\[
    \|f\|_{H^k} =\|(I-\Pi^{0,1}_{\bL})u\|_{H^k} \leq\|u\|_{H^k}.
\]
\end{lemma}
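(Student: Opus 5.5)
The plan is to reduce everything to the flat Kähler identities on the elliptic curve $D$. Since $D$ is a flat torus and $\bL$ carries a flat unitary connection, the operators $\p_{\bL}$, $\bp_{\bL}$, $\iota_v$, $\iota_{\bar v}$ and wedging with the parallel forms $\Omega_D$, $\bar\Omega_D$ all commute with $\nabla_{\bL}$. The curvature of $\nabla_{\bL}$ vanishes, so $\p_{\bL}\bp_{\bL}+\bp_{\bL}\p_{\bL}=0$, and $\p_{\bL}$ commutes with $\bp_{\bL}$ up to sign on sections.

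\textbf{Reformulation.} Write $u=\phi\,\bar\Omega_D$ with $\phi\in\Omega^0(D,\bL)$, using the parallel $(0,1)$-form $\bar\Omega_D$. Then $\p_{\bL}u=(\nabla_v\phi)\,\Omega_D\wedge\bar\Omega_D$, so $\iota_v\p_{\bL}u=(\nabla_v\phi)\bar\Omega_D$. The equation therefore reads
\[
\nabla_{\bar v}f=\nabla_v\phi .
\]
Here $\bp_{\bL}f=(\nabla_{\bar v}f)\bar\Omega_D$, since $\bar v$ is dual to $\bar\Omega_D$. Both $\nabla_v$ and $\nabla_{\bar v}$ are parallel first-order operators on the flat unitary bundle, with $\nabla_v^*=-\nabla_{\bar v}$ up to the volume normalization. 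They commute because the curvature is zero, and each is normal.

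\textbf{Spectral decomposition.} Decompose $L^2(D,\bL)$ into joint eigenfunctions of $\nabla_v$ and $\nabla_{\bar v}$. Pulling back to the universal cover, $\bL$ becomes trivial with a unitary character, and the eigenfunctions are characters $e_\lambda$. On $e_\lambda$ we have $\nabla_v e_\lambda=\lambda e_\lambda$ and $\nabla_{\bar v}e_\lambda=-\bar\lambda e_\lambda$ for $\lambda$ in a shifted lattice. This relation holds because the characters are unitary, so $\nabla_v+\overline{\nabla_v}$ acts with purely imaginary constants.

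Consequently $|\nabla_v e_\lambda|=|\nabla_{\bar v}e_\lambda|$ on each mode. We solve mode by mode:
\[
f_\lambda=-\frac{\lambda}{\bar\lambda}\,\phi_\lambda \quad\text{for }\lambda\neq0, \qquad f_0=0 .
\]
The condition $f_0=0$ is exactly the constraint $f\perp\mathcal H^{0,0}_{\bL}$, since $\ker\bp_{\bL}$ on sections is the $\lambda=0$ mode. For uniqueness, two solutions differ by an element of $\ker\bp_{\bL}=\mathcal H^{0,0}$, and orthogonality kills that difference.

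Solvability requires the right-hand side to be orthogonal to $\mathcal H^{0,1}_{\bL}$, i.e. to the $\lambda=0$ mode of $\nabla_v\phi$. This is automatic, because $\nabla_v\phi$ has no $\lambda=0$ component.

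\textbf{Norms.} On the $\lambda$-mode, $|f_\lambda|=|\phi_\lambda|$ for $\lambda\neq0$. Also $\Pi^{0,1}_{\bL}u$ is the $\lambda=0$ part $\phi_0\bar\Omega_D$, since $\mathcal H^{0,1}_{\bL}=\ker\bp^*_{\bL}$ on $(0,1)$-forms, which is spanned by $\lambda=0$ modes times $\bar\Omega_D$. The $H^k$ norm is diagonal in the modes, with weight $\sum_{j\le k}|\lambda|^{2j}$ up to fixed constants, because $\nabla^j$ is built from $\nabla_v$ and $\nabla_{\bar v}$ and $|\bar\Omega_D|$ is constant. Parseval then gives
\[
\|f\|_{H^k}=\|(I-\Pi^{0,1})u\|_{H^k}\le\|u\|_{H^k}.
\]

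\textbf{Main obstacle.} The delicate point is making the mode identity $|\nabla_v e_\lambda|=|\nabla_{\bar v}e_\lambda|$ rigorous and canonical without coordinates. One needs it to hold for general flat unitary $\bL$, including the non-holomorphically-trivial case where $\lambda=0$ does not occur at all, so that $\mathcal H^{0,0}=\mathcal H^{0,1}=0$.

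To avoid coordinates, I would instead use the operator identity $\nabla_v\nabla_v^*=\nabla_v^*\nabla_v=\nabla_{\bar v}^*\nabla_{\bar v}$, which follows from the vanishing curvature. It gives $\|\nabla_v\psi\|=\|\nabla_{\bar v}\psi\|$ for all $\psi$. Then $\nabla_{\bar v}$ and $\nabla_v$ have the same kernel, and $U:=\nabla_v\circ\nabla_{\bar v}^{-1}$ is a well-defined isometry on $(\ker)^\perp$ commuting with $\nabla$. We set $f=\nabla_{\bar v}^{-1}\nabla_v\phi$. Higher $H^k$ norms follow by commuting $\nabla$ through these operators.
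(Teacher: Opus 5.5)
Your proof is correct, but your main argument takes a different route from the paper's.

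The paper never diagonalizes. It works as follows:
\begin{itemize}
\item It sets $P=\iota_v\partial_{\bL}=\nabla_v$ and observes $P^*=-\nabla_{\bar v}$ and $\Delta_{\bL}=P^*P=PP^*$. This is where flatness and $|v|=1$ enter.
\item It defines $f:=\bp_{\bL}^*G_1Pu$ with the Green operator $G_1$, which gives $\bp_{\bL}f=(I-\Pi^{0,1}_{\bL})Pu=Pu$.
\item It obtains the $L^2$ identity from $\|f\|^2=\langle Pu,G_1Pu\rangle=\langle u,(I-\Pi^{0,1}_{\bL})u\rangle$, and the higher-$k$ identities by commuting parallel derivatives through.
\end{itemize}

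Your fallback in the last paragraph is essentially this argument. The identity $\nabla_v^*\nabla_v=\nabla_{\bar v}^*\nabla_{\bar v}$ is the identity $P^*P=PP^*$. Writing $u=\phi\bar\Omega_D$, your $\nabla_{\bar v}^{-1}\nabla_v\phi$ (inverse taken on $(\ker)^\perp$) is exactly the paper's $\bp_{\bL}^*G_1Pu$.

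Your Fourier version instead exhibits the solution operator as the unimodular multiplier $-\lambda/\bar\lambda$ on the shifted dual lattice. This makes it transparent why small divisors are harmless here. The modes with tiny $|\lambda|$ are exactly the dangerous ones when $L^{-m}$ is nearly trivial, and they are multiplied by a number of modulus one.

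The cost is bookkeeping that the abstract argument avoids. For example, the relation $\nabla_{\bar v}e_\lambda=-\bar\lambda e_\lambda$ needs \emph{both} real directions to act by imaginary constants, not only $v+\bar v$. Equivalently, you need $d\ell(\bar v)=\overline{d\ell(v)}$ for the real-linear phase $\ell$.

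In both versions, the exact equality $\|f\|_{H^k}=\|(I-\Pi^{0,1}_{\bL})u\|_{H^k}$ (rather than equality up to a constant) uses the normalization $|\bar\Omega_D|=|v|=1$. This is because $\|\phi\bar\Omega_D\|_{H^k}=|\bar\Omega_D|\,\|\phi\|_{H^k}$. You should state this explicitly instead of writing ``up to fixed constants''.
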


\begin{proof}
Denote $$P=\iota_v\p_{\bL}: \Omega^{0, q}(D, \bL)\rightarrow \Omega^{0, q}(D, \bL). $$
Since $v$ is parallel with unit norm and $\mathbb L$ is flat, it is easy to see that $P=\nabla_v, P^*=-\nabla_{\overline{v}},$ and $$\Delta_{\bL}=P^*P=PP^*. $$

Let $G_q$ be the inverse of $\Delta_{\mathbb L}$ on
$(\mathcal H_{\mathbb L}^{0,q})^\perp$, extended by zero on
$\mathcal H_{\mathbb L}^{0,q}$. Thus
\[
    \Delta_{\mathbb L}G_q =G_q\Delta_{\mathbb L} =\I-\Pi_{\mathbb L}^{0,q}.
\]
Since $P$ and $P^*$ commute with $\Delta_{\bL}$, they also commute with $G_q$. 
Given $u\in \Omega^{0, 1}(D, \bL)$, we define
$$f:=\bp_{\bL}^*G_1 Pu\in \mathrm{Im}(\dbar_{\mathbb L}^*)\subset (\mathcal H_{\bL}^{0,0})^\perp.$$
Since $Pu \in (\mathcal H_{\bL}^{0,1})^\perp$, we have
$$\bp_{\bL} f=\bp_{\bL} \bp_{\bL}^* G_1 Pu=(\I-\Pi_{\bL}^{0,1})Pu = Pu. $$
The uniqueness follows from the normalization that $f\in (\mathcal H_{\bL}^{0,0})^\perp$.

It is straightforward to compute that 
$$\|f\|_{L^2}^2=\langle\bp_{\bL}\bp^*_{\bL}G_1Pu, G_1Pu\rangle_{L^2}=\langle Pu, G_1Pu\rangle_{L^2}=\langle u,(\I-\Pi_{\bL}^{0,1})u\rangle_{L^2}=\|(\I-\Pi_{\bL}^{0,1})u\|_{L^2}^2. $$
Finally, since both the base metric and the connection on $\mathbb L$ are flat, covariant differentiation along a global parallel orthonormal frame commutes with the operators $P$, $\bp_{\bL}^*$, and $G_1$. From this one obtains the isometry for higher $k$.  
\end{proof}

\subsection{The linearized problem}
For $T\gg1$, we define $\mathcal U_T$ to be the subset in $L^\times$ given by $z\geq T$. Recall the exponential Sobolev weight, normalized at the left endpoint of the tail; compare \cite[\S1]{LockhartMcOwen1985}:
\begin{definition}
    For $ 0<\delta<\frac{1}{2}$, an integer $\ell\geq0$, and a tensor $\sigma$ on $\mathcal U_T$, define
\[
 \|\sigma\|_{\ell,\delta,T}:=\left(\sum_{j=0}^{\ell}\int_{\mathcal U_T}
   e^{2\delta(z-T)}|\nabla^j\sigma|_{g}^2\,d\mu_{g}\right)^{1/2}.
\]
We denote by $H^\ell_\delta(\mathcal U_T)$ the sections in $H^\ell_{\rm loc}$ for which this norm is finite. 
\end{definition}
\noindent Then for $\ell \geq 1$, we let
\[
 \mathcal H_{\delta,T}^\ell
 :=\left\{f\in H^\ell_\delta(\mathcal U_T;\C)\mid\mathcal L_{\bar\xi}f=0\right\}
\]
be the closed subspace of fiberwise-holomorphic functions and let
\[
 \mathcal D_{\delta,T}^\ell :=\{\gamma\in H^\ell_\delta (\mathcal U_T;\Lambda^2T^*\mathcal U_T\otimes\C)\mid d\gamma=0,\ \iota_{\bar\xi}\gamma=0\}.
\]

\begin{proposition}\label{p:weighted-right-inverse}
	There are $T_0>0$ and a bounded linear operator for $T\geq T_0, \ell\geq 1, \delta\in (0, \frac{1}{2})$, 
	$$\mathcal Q_T: \mathcal D_{\delta,T}^\ell \rightarrow \mathcal H_{\delta,T}^{\ell+1}$$
	such that $$d_H(\mathcal Q_T(\gamma)\vartheta)=\gamma-\Theta\wedge \iota_\xi\gamma, $$and  
	$$\|\mathcal Q_T(\gamma)\|_{\ell+1, \delta, T}\leq C(\ell, \delta)\|\gamma\|_{\ell, \delta, T}.$$ Moreover,  for every $f\in \mathcal H_{\delta,T}^{\ell+1}$, 
    $$\mathcal Q_T(df\wedge\vartheta)=f.$$
\end{proposition}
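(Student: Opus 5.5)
The plan is to construct $\mathcal Q_T$ mode by mode. I would Fourier expand in the fiber variable, reduce the equation on each mode to a $\bar\partial$-problem on $D$ twisted by $L^{-m}$, and solve that problem using Lemma \ref{l:flat_isometry}. The point is that the Lemma is isometric, hence uniform in $m$, so no small divisors appear.

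\textbf{Decomposition of $\gamma$.} Given $\gamma\in\mathcal D^\ell_{\delta,T}$, write $a:=\iota_\xi\gamma$ and $b:=\gamma-\Theta\wedge a$, both horizontal. Exactly as in \eqref{eq:closed-horizontal-identities}, $d\gamma=0$ and $\iota_{\bar\xi}\gamma=0$ give
\[
\mathcal L_{\bar\xi}a=\mathcal L_{\bar\xi}b=0,\qquad d_Hb=0,\qquad \mathcal L_\xi b=d_Ha .
\]
Since these forms are fiberwise holomorphic, I expand them in Laurent series in a parallel unitary fiber coordinate $w$:
\[
a=\sum_m \widehat{a_m},\qquad b=\sum_m\widehat{b_m},\qquad a_m\in\Omega^1(D,L^{-m}),\quad b_m\in\Omega^2(D,L^{-m}).
\]
On the mode $m$ the weighted norm reduces to $\|a_m\|$ times $\int_T^\infty e^{2\delta(z-T)-mz}\,dz$. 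Finiteness on the cylinder $\mathcal U_T$ therefore forces $m\geq1$, since $\delta<\tfrac12$ and $m\le0$ terms are not integrable. By Lemma \ref{lem:horizontal-differential-connection}, the identity $\mathcal L_\xi b=d_Ha$ becomes $m\,b_m=d_{\nabla}a_m$ on $D$.

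\textbf{Solving each mode.} Split $a_m=c_m\,\Omega_D+a_m^{0,1}$ with $c_m$ a section of $L^{-m}$. For $f=\sum\widehat{f_m}$ fiberwise holomorphic, $d_H(f\vartheta)=\widehat{\bar\partial f_m}\wedge\vartheta$, because $\vartheta$ is closed and parallel. The equation $d_H(f\vartheta)=b$ is then the scalar equation
\[
\bar\partial f_m = \tfrac1m\bigl(\bar\partial c_m \pm \iota_v\partial a_m^{0,1}\bigr),
\]
with the sign fixed by orientation conventions. I would set
\[
f_m:=\tfrac1m\bigl(c_m\pm F_m\bigr),
\]
where $F_m$ is the unique solution from Lemma \ref{l:flat_isometry} with $u=a_m^{0,1}$ and $\bar{\mathbb L}=L^{-m}$. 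The Lemma gives $\|F_m\|_{H^k}\leq\|a_m^{0,1}\|_{H^k}$ with a constant independent of $m$ and of how close $L^{-m}$ is to trivial. Define $\mathcal Q_T(\gamma):=\sum_{m\ge1}\widehat{f_m}$; this is linear in $\gamma$.

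\textbf{The identity $\mathcal Q_T(df\wedge\vartheta)=f$.} If $\gamma=df\wedge\vartheta$ with $f$ fiberwise holomorphic, then $a=(\xi f)\vartheta$ is purely of type $(1,0)$. Hence $a_m^{0,1}=0$, so $F_m=0$, and $c_m=mf_m$, which returns $f$. This is why I do not project $c_m$ off the harmonic part: only the Lemma term carries the normalization.

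\textbf{Estimates and main obstacle.} The expected main obstacle is the weighted estimate with the gain of one derivative, uniformly in $m$ and $T$. The modes are $L^2$-orthogonal on each circle, and the weight factorizes, so it suffices to bound each mode with constants independent of $m$ and then sum. Derivatives of $\widehat{f_m}$ in the vertical directions ($\xi$, $\bar\xi$, equivalently $\partial_z$ and the angle) cost factors of $m$. These are absorbed by the explicit $1/m$, since $m f_m=c_m\pm F_m$ is bounded by $a_m$ in $H^k$. Horizontal derivatives are handled by flatness. The identity $P^*P=PP^*$ from the proof of Lemma \ref{l:flat_isometry} gives $\|\partial f_m\|=\|\bar\partial f_m\|$, so
\[
\|\nabla f_m\|_{H^k}\lesssim\|\bar\partial f_m\|_{H^k}\lesssim \|b_m\|_{H^k}+\tfrac1m\|a_m\|_{H^{k+1}} .
\]
The last term is one derivative too many; I would remove it using $\|\nabla^{k+1}\widehat{a_m}\|\gtrsim m\|a_m\|_{H^k}+\|a_m\|_{H^{k+1}}$ for mixed derivatives, and otherwise rewrite it via the relation $m\,b_m=d_\nabla a_m$.

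Combining the vertical and horizontal bounds yields $\|\mathcal Q_T\gamma\|_{\ell+1,\delta,T}\le C(\ell,\delta)\|\gamma\|_{\ell,\delta,T}$. I expect $C(\ell,\delta)$ to be independent of $T$ because the weight is normalized at $z=T$. Checking carefully that all $m$-factors balance, and that the series converges in $H^{\ell+1}_\delta$, is where I would spend most of the effort.
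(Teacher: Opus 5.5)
Your proposal is correct and follows the paper's proof essentially step for step: the same splitting $\gamma=\Theta\wedge a+b$ into fiber modes with $m\geq1$, the relation $m\,b_m=d_\nabla a_m$, the choice $f_m=\tfrac1m(c_m-F_m)$ with $F_m$ from Lemma~\ref{l:flat_isometry} (this is the paper's $-m^{-1}(u_m+w_m)$), the same check of $\mathcal Q_T(df\wedge\vartheta)=f$, and the same vertical and horizontal estimates. The horizontal bound is simpler than your display suggests: by construction $\bar\partial f_m\wedge\vartheta=b_m$, i.e.\ $\bar\partial f_m=-\iota_v b_m$, so $\|\bar\partial f_m\|_{H^k}\leq\|b_m\|_{H^k}$ directly and the term $\tfrac1m\|a_m\|_{H^{k+1}}$ never arises; your second option (rewriting via $m\,b_m=d_\nabla a_m$) is exactly this, whereas the ``mixed derivatives'' bound would not by itself remove the loss.
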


Let us explain this more clearly using the following commutative diagram:
\[
\begin{tikzcd}[column sep=5em, row sep=3.5em, labels={font=\small}]
\mathcal H^{\ell+1}_{\delta,T}
  \arrow[rr, "d(\,\cdot\,\wedge\vartheta)"]
  \arrow[dr, "d_H(\,\cdot\,\wedge\vartheta)"']
&& \mathcal D^{\ell}_{\delta,T}
  \arrow[ll, bend left=18, red, "\mathcal Q_T" red]
  \arrow[dl, "\text{take horizontal part}"]
\\
& H^{\ell}_{\delta}\bigl(\mathcal U_T;\Lambda^2 T^*\mathcal U_T\otimes\C\bigr) &
\end{tikzcd}
\]
The proposition asserts that the horizontal arrow admits a bounded left inverse
$\mathcal Q_T$ that is compatible with the other two maps, in the sense that
the resulting diagram remains commutative. Note that the horizontal map is nothing but $f \longmapsto \Phi_f^*\Omega_0-\Omega_0$ which is a natural
 linear approximation of the nonlinear operator
\[
f \longmapsto \Phi_f^*\Omega_M-\Omega_0.
\]
We may therefore apply the implicit function theorem to the functional
\[
f \longmapsto \mathcal Q_T\bigl(\Phi_f^*\Omega_M-\Omega_0\bigr),
\]
to get a zero of this functional. The commutativity property then implies that the
horizontal part of
\[
\Phi_f^*\Omega_M-\Omega_0
\]
vanishes, which implies the horizontal part of $\Phi_f^*\Omega_M$ vanishes.
\begin{proof}
 Write $\gamma=\Theta\wedge\alpha+\beta$ with $\alpha := \iota_\xi\gamma$. Since $\iota_{\bar \xi}\gamma=0$ and $d\gamma=0$, it follows that 
$\mathcal L_{\bar\xi}\gamma=0$. In particular, $\mathcal L_{\bar\xi}\alpha=\mathcal L_{\bar \xi}\beta=0$. 
We first carry out the construction for finite sums of Fourier modes.
 From Fourier expansion, 
$$\alpha=\sum_{m\geq 1} \alpha_m; \ \ \ \beta=\sum_{m\geq 1}\beta_m$$
where $\alpha_m, \beta_m$ are homogeneous of degree $m$ in the fiber direction. For different $m$, the homogeneous pieces are orthogonal in the weighted Sobolev
norms. We only have $m\geq 1$ since $\gamma$ has finite Sobolev norm. 
The property $\mathcal L_\xi\beta=d_H\alpha$ implies that 
\begin{equation}\label{eq--beta and alpha}
    m\beta_m=d_H\alpha_m. 
\end{equation}
Decompose the horizontal one-form $\alpha_m$ as $$\alpha_m=-u_m\vartheta+v_m ,$$ where $u_m\in \Omega^0(D, L^{-m})$, $v_m\in \Omega^{0,1}(D, L^{-m})$. Here we have used the identification between an element in $\Omega^p(D,L^{-m})$ with its fiberwise-homogeneous horizontal lift to $L^\times$.
 Then under this identification, using Lemma \ref{lem:horizontal-differential-connection}, \eqref{eq--beta and alpha} is equivalent to
\[
 m\beta_m
 =d_{\nabla_{L^{-m}}}\alpha_m
 =\vartheta\wedge\bar\partial_{L^{-m}}u_m
  +\partial_{L^{-m}}v_m.
\]
Then applying the Lemma~\ref{l:flat_isometry} to $L^{-m}$ gives a unique
$w_m\perp\mathcal H_{L^{-m}}^{0,0}$ with $$\bp_{L^{-m}} w_m=\iota_v\partial_{L^{-m}} v_m$$ and $\|w_m\|_{H^\ell}=\|(I-\Pi_{L^{-m}}^{0,1})v_m\|_{H^\ell}$. We define 
$$f_m:=-m^{-1}(u_m+w_m)$$
and 
$$f:=\sum_{m\geq 1}f_m.$$
Since
$\partial_{L^{-m}}v_m =\vartheta\wedge\iota_v\partial_{L^{-m}}v_m$ on $D$, we have
\begin{align*}
 d_H(f_m\vartheta)
 &=-\vartheta\wedge\bar\partial_{L^{-m}}f_m \\
 &=\frac1m\vartheta\wedge
   \bigl(\bar\partial_{L^{-m}}u_m+
         \bar\partial_{L^{-m}}w_m\bigr)
 =\beta_m.
\end{align*}
It follows that $d_H(f\vartheta)=\beta$. Then we define
$\mathcal Q_T(\gamma):=f$ and have 
\begin{equation}
    d_H(\mathcal Q_T(\gamma) \vartheta)=\beta=\gamma-\Theta\wedge \iota_{\xi}\gamma.
\end{equation}

To obtain the estimates on $f$ we first see that 
\begin{equation}\label{eq-vertical}
    \|\mathcal L_\xi f_m\|_{H^\ell}=\|mf_m\|_{H^\ell}\leq C\|\alpha_m\|_{H^\ell}, \quad \mathcal L_{\bar \xi}f_m=0.
\end{equation}
We also have $\|\bp_{L^{-m}} f_m\|_{H^\ell}=\|\iota_v\beta_m\|_{H^\ell}\leq \|\beta_m\|_{H^\ell}$, and 
since $L^{-m}$ is flat, by K\"ahler identity, we obtain 
$$\|\p_{L^{-m}} f_m\|_{H^\ell}=\| \bp_{L^{-m}} f_m\|_{H^\ell}\leq \|\beta_m\|_{H^\ell}.$$
Therefore we have 
\begin{equation}\label{eq:pure-horizontal-Q-estimate}
 \|\nabla_{L^{-m}}^{\ell+1}f_m\|_{L^2}
 \leq C_\ell\|\nabla_{L^{-m}}^\ell\beta_m\|_{L^2}.
\end{equation}
This controls the pure horizontal derivative of $f$ of order $\ell+1$.
	
Finally, we estimate the derivatives of \(f_m\) involving vertical directions. For a homogeneous function of degree $m$, each vertical derivative contributes a factor $m$ and each horizontal derivative becomes a
Chern covariant derivative on $L^{-m}$.  Thus
\begin{equation}\label{eq:vertical derivative}
\sum_{q=0}^j|\nabla^q f_m|_{g}^2 \leq C_j e^{-mz}\sum_{r+s\leq j} m^{2r}|\nabla_{L^{-m}}^sf_m|^2.
\end{equation}
The comparison constants depend only on $j$ and the fixed model
geometry, and are independent of $m$ and $T$.
Combining \eqref{eq-vertical}, \eqref{eq:pure-horizontal-Q-estimate} and \eqref{eq:vertical derivative}, multiplying by $e^{2\delta(z-T)}$, integrating over $\mathcal U_T$,
and summing over $m$ using the orthogonality, gives
\[
 \|f\|_{\ell+1,\delta,T} \leq C_{\ell,\delta}\|\gamma\|_{\ell,\delta,T}.
\]

For a general $\gamma\in\mathcal D_{\delta,T}^{\ell}$, the
fiberwise Fourier decomposition holds distributionally. Since
fiber rotations preserve the cylindrical metric, its connection,
and the weight, the Fourier partial sums converge in
$H^\ell_\delta(\mathcal U_T)$ and preserve the defining equations
of $\mathcal D_{\delta,T}^{\ell}$.

The coefficients of each homogeneous mode are Sobolev sections
on $D$. They can be regularized by the heat operators of the flat
Hodge Laplacians, which commute with
$d_{\nabla_{L^{-m}}}$ and therefore preserve
$m\beta_m=d_H\alpha_m$. Thus the preceding modewise construction
and estimates apply to these Sobolev coefficients by passage to
the limit.

Writing
\[
    \gamma^{(N)}:=\sum_{m=1}^N\gamma_m,
    \qquad
    f^{(N)}:=\sum_{m=1}^N f_m,
\]
the estimate and orthogonality give, for $N'>N$,
\[
    \|f^{(N')}-f^{(N)}\|_{\ell+1,\delta,T}^2
    \leq C_{\ell,\delta}^2
       \sum_{m=N+1}^{N'}\|\gamma_m\|_{\ell,\delta,T}^2.
\]
Hence $f^{(N)}$ converges in
$\mathcal H_{\delta,T}^{\ell+1}$ to a limit $f$.
The defining identity passes to the limit distributionally,
and the same estimate proves that
$\mathcal Q_T(\gamma):=f$ is a bounded linear operator.

Finally, let
$h=\sum_{m\geq1}h_m\in\mathcal H_{\delta,T}^{\ell+1}$
and take $\gamma=dh\wedge\vartheta$. Since
$\iota_\xi\vartheta=0$, its horizontal one-form component is
\[
    \alpha=\iota_\xi(dh\wedge\vartheta)
          =(\mathcal L_\xi h)\vartheta,
\]
so $\alpha_m=mh_m\vartheta$. In the preceding decomposition,
therefore, $u_m=-mh_m$ and $v_m=0$. The normalization in
Lemma~\ref{l:flat_isometry} gives $w_m=0$, and the construction
returns $f_m=h_m$. Consequently
\[
    \mathcal Q_T(dh\wedge\vartheta)=h
\]
and this completes the proof.
\end{proof}

\subsection{Applying the implicit function theorem}

Now we fix $k\geq 4$. Let
\[
\eta:=\Omega_M-\Omega_0.
\]
Then by Proposition \ref{p:vertical_disc}, we have $d\eta=0$, $\iota_{\bar\xi}\eta=0$, and
$|\nabla^j\eta|_g=O(e^{-z/2})$ for every $j\geq0$.
For $T\geq T_0+1$, define
\begin{equation}
\varepsilon_T
:=\|\eta\|_{k+2,\delta,T-1}
+\sum_{j=0}^{k+2}\sup_{\mathcal U_{T-1}}|\nabla^j\eta|_g.
\end{equation}
The asymptotic estimate \eqref{e:volume form close} and
$\delta<\frac12$ imply that
$\varepsilon_T=O(e^{-(1/2-\delta)T})$ and in particular
$\varepsilon_T\to0$ as $T\to\infty$.
\begin{proposition}\label{prop-nonlinear-solution}
There exists $\epsilon>0$ such that the map 
	$$\Psi_T: B_\epsilon(0)\subset \mathcal H_{\delta,T}^{k+1} \rightarrow \mathcal H_{\delta,T}^{k+1}; \quad f\mapsto \mathcal Q_T(\Phi_f^*\Omega_M-\Omega_0)$$
is a well-defined $C^1$ map of real Banach spaces. Furthermore, for any $T$ large enough, there is a unique
$f_T$ near the origin such that
$\Psi_T(f_T)=0$, and
\[
\|f_T\|_{k+1,\delta,T}\leq2C\varepsilon_T.
\]
\end{proposition}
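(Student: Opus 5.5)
The plan is to reduce the equation $\Psi_T(f)=0$ to a fixed-point problem for a contraction, using the exact structure of the model form $\Omega_0$. First compute $\Phi_f^*\Omega_0$. In a local parallel frame, $\Phi_f(p,w)=(p,e^{f}w)$, so $\Phi_f^*\Theta=\Theta+df$ and $\Phi_f^*\vartheta=\vartheta$, because $\pi\circ\Phi_f=\pi$. Hence, exactly and with no higher-order terms,
\[
\Phi_f^*\Omega_0-\Omega_0=df\wedge\vartheta .
\]
Writing $\Omega_M=\Omega_0+\eta$ and using linearity of $\mathcal Q_T$ together with the identity $\mathcal Q_T(df\wedge\vartheta)=f$ from Proposition \ref{p:weighted-right-inverse}, we get
\[
\Psi_T(f)=f+\mathcal Q_T(\Phi_f^*\eta).
\]
So zeros of $\Psi_T$ are exactly the fixed points of $\mathcal S_T(f):=-\mathcal Q_T(\Phi_f^*\eta)$. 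I will apply the Banach fixed-point theorem to $\mathcal S_T$ on the ball of radius $2C\varepsilon_T$ in $\mathcal H^{k+1}_{\delta,T}$, where $C$ is the constant from Proposition \ref{p:weighted-right-inverse} times the composition constants below.

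\textbf{Well-definedness.} We need $\Phi_f^*\eta\in\mathcal D^k_{\delta,T}$ for $f$ in a small ball of $\mathcal H^{k+1}_{\delta,T}$.
\begin{itemize}
\item Since $\dim_{\mathbb R}=4$, $k\geq 4$, and the cylindrical geometry is uniform, weighted Sobolev embedding gives $\|f\|_{C^{k-2}}\leq C\|f\|_{k+1,\delta,T}$ (the weight $e^{\delta(z-T)}\geq 1$ only helps). For $\epsilon$ small this makes $\|\operatorname{Re} f\|_\infty<\frac12$, so $\Phi_f:\mathcal U_T\to\mathcal U_{T-1}$ is a diffeomorphism onto its image, $C^1$-close to the identity.
\item Closedness is preserved by pullback, so $d\Phi_f^*\eta=0$.
\item Since $f$ is fiberwise holomorphic, $(\Phi_f)_*\bar\xi=(1+\mathcal L_{\bar\xi}\bar f)\bar\xi$ by the formula preceding the key lemma. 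Together with $\iota_{\bar\xi}\eta=0$ this gives $\iota_{\bar\xi}\Phi_f^*\eta=0$.
\end{itemize}

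\textbf{Self-map bound.} The form $\Phi_f^*\eta$ is built from $\eta\circ\Phi_f$ contracted with $d\Phi_f$, which involves $f$ and $df$. Its $k$-th derivatives involve $\nabla^{j}\eta\circ\Phi_f$ for $j\leq k$ and up to $k+1$ derivatives of $f$. The standard tame composition estimate, with the $C^{k-2}$ control of $f$ handling lower-order products, gives
\[
\|\Phi_f^*\eta\|_{k,\delta,T}\leq C'\varepsilon_T .
\]
Here the weight causes no trouble: $z\circ\Phi_f=z-2\operatorname{Re}f$ differs from $z$ by at most $1$, so $e^{2\delta(z-T)}$ is comparable under pullback. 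The $L^2$ part is controlled by $\|\eta\|_{k+2,\delta,T-1}$ after changing variables, with Jacobian close to $1$, and the sup-norm parts of $\varepsilon_T$ control the coefficients multiplying derivatives of $f$. Hence $\|\mathcal S_T(f)\|\leq C\varepsilon_T$.

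\textbf{Lipschitz estimate.} For $f,h$ in the ball, set $f_t=h+t(f-h)$ and write
\[
\Phi_f^*\eta-\Phi_h^*\eta=\int_0^1\frac{d}{dt}\Phi_{f_t}^*\eta\,dt .
\]
The integrand is a pullback of a Lie-derivative type expression. It is linear in $(f-h)$ and $d(f-h)$, with coefficients $\eta$ and $\nabla\eta$ evaluated along $\Phi_{f_t}$, and involves derivatives of $f_t$. Its $H^k_\delta$ norm is therefore bounded by $C\varepsilon_T\|f-h\|_{k+1,\delta,T}$. This uses up to $k+1$ derivatives of $\eta$ in sup norm, which is why $\varepsilon_T$ contains $k+2$ derivatives.

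\textbf{Conclusion.} Once $T$ is large enough that $C\varepsilon_T<\frac12$, the map $\mathcal S_T$ is a contraction of the ball of radius $2C\varepsilon_T$ into itself. It therefore has a unique fixed point $f_T$ there, with $\|f_T\|_{k+1,\delta,T}\leq 2C\varepsilon_T$. The $C^1$ regularity of $\Psi_T$ follows from the same differentiation-in-$f$ computation, since the derivative is given by the integrand above and depends continuously on the base point. The main obstacle is making the composition and Lipschitz estimates rigorous in the weighted Sobolev spaces, in particular justifying the loss-free counting of derivatives. I would handle this by working in the uniform cylindrical charts, where $\Phi_f$ is a bounded perturbation of the identity, and applying Moser-type product estimates chart by chart before summing with the weight.
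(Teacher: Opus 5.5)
Your proposal is correct and follows the paper's proof essentially step for step. The shared steps are: the exact identity $\Phi_f^*\Omega_0-\Omega_0=df\wedge\vartheta$ combined with $\mathcal Q_T(df\wedge\vartheta)=f$ to write $\Psi_T(f)=f+\mathcal Q_T(\Phi_f^*\eta)$; the interpolation $f_t=h+t(f-h)$, which expresses the difference of pullbacks through a Lie derivative of $\eta$ along a vertical field with at most one derivative of $f-h$; and the Banach fixed-point theorem on the ball of radius $2C\varepsilon_T$. The one detail you should state explicitly is that $T$ must also be large enough that $2C\varepsilon_T$ is below the radius $\epsilon$ on which your composition and Lipschitz estimates hold; this is automatic since $\varepsilon_T\to0$.
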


\begin{proof}
Since $k\geq4$, weighted Sobolev embedding implies
that a sufficiently small ball in
$\mathcal H_{\delta,T}^{k+1}$ has uniformly small $C^1$ norm.  Hence
$\Phi_f$ is a $C^2$ diffeomorphism onto its image and maps $\mathcal U_T$
into $\mathcal U_{T-1}$.  Since $\mathcal L_{\bar\xi}f=0$, its
restriction to each fiber is holomorphic.  It follows that
$\Phi_f^*\Omega_M-\Omega_0$ belongs to
$\mathcal D^k_{\delta,T}$.

Since $\Phi_f^*\Omega_0-\Omega_0=df\wedge\vartheta$, by Proposition \ref{p:weighted-right-inverse}, we have
\[
    \mathcal Q_T(\Phi_f^*\Omega_0-\Omega_0) = \mathcal Q_T(df\wedge\vartheta) = f,
\]
therefore
\begin{equation*}
\Psi_T(f)=f+\mathcal E_T(f),
\qquad
\mathcal E_T(f):=\mathcal Q_T(\Phi_f^*\eta).
\end{equation*}
Fix $r_0>0$ sufficiently small, independently of $T$.
For
$f,g\in B_{r_0}(0)$, along the path $f_s=g+s(f-g)$ the derivative of the
pullback is a pullback of a Lie derivative of $\eta$ by a vertical
vector field whose coefficients are linear in $f-g$ and its complex
conjugate.  The Lie derivative contains at most one derivative of
$f-g$.  The uniform weighted multiplication and composition estimates
on the flat cylinder, Sobolev embedding, and estimates of $\eta$ therefore give
\begin{align*}
\|\Phi_f^*\eta\|_{k,\delta,T}&\leq C\varepsilon_T,\\
\|\Phi_f^*\eta-\Phi_g^*\eta\|_{k,\delta,T}&\leq C\varepsilon_T\|f-g\|_{k+1,\delta,T},\\
\|D(\Phi_{\bullet}^*\eta)_f[h]\|_{k,\delta,T}&\leq C\varepsilon_T\|h\|_{k+1,\delta,T},
\end{align*}
with constants $C$ independent of $T$. Applying Proposition~\ref{p:weighted-right-inverse} and increasing
$C$ if necessary, we obtain
\[
 \|\mathcal E_T(f)\|_{k+1,\delta,T}\leq C\varepsilon_T,
 \qquad
 \|\mathcal E_T(f)-\mathcal E_T(g)\|_{k+1,\delta,T}
 \leq C\varepsilon_T\|f-g\|_{k+1,\delta,T}.
\]
Choose $T$ sufficiently large that $C\varepsilon_T\leq\tfrac12$
and $2C\varepsilon_T<r_0$. Then the map
$f\mapsto-\mathcal E_T(f)$ is a contraction of the closed ball
\[
 \overline B_{2C\varepsilon_T}(0)
 \subset\mathcal H_{\delta,T}^{k+1}
\]
into itself. Its unique fixed point $f_T$ satisfies
\[
 \Psi_T(f_T)=0,
 \qquad
 \|f_T\|_{k+1,\delta,T}\leq2C\varepsilon_T.
\]
\end{proof}
We now fix a $T$ sufficiently large so that Proposition \ref{prop-nonlinear-solution} holds. For this $f_T$, write
\[
    \Phi_{f_T}^*\Omega_M=\Theta\wedge\alpha_T+\beta_T,
\]
where $\beta_T$ is horizontal. 
Then we obtain the following theorem.

\begin{theorem}\label{thm--existence of plh function}
   We have $\beta_T=0$ and hence the function $z=-\log |\zeta|^2_h$ is $\Phi_{f_T}^*(J_M)$-pluriharmonic.
\end{theorem}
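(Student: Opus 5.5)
We deduce $\beta_T=0$ from the commutativity built into Proposition~\ref{p:weighted-right-inverse}, applied to $\gamma:=\Phi_{f_T}^*\Omega_M-\Omega_0$. We then read off pluriharmonicity from the criterion recorded before the key Lemma: $z$ is pluriharmonic for a complex structure exactly when the corresponding $\beta$ vanishes.

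\textbf{Step 1: $\gamma$ lies in the domain of $\mathcal Q_T$.} As in the proof of Proposition~\ref{prop-nonlinear-solution}, $\gamma$ belongs to $\mathcal D^k_{\delta,T}$. Closedness comes from $d\Omega_M=d\Omega_0=0$. The condition $\iota_{\bar\xi}\gamma=0$ holds because $\Phi_{f_T}$ is fiber preserving and fiberwise holomorphic, since $\mathcal L_{\bar\xi}f_T=0$. By the formula for $(\Phi_f)_*\bar\xi$, the vector $\bar\xi$ is pushed to a multiple of $\bar\xi$, and $\iota_{\bar\xi}\Omega_M=\iota_{\bar\xi}\Omega_0=0$.

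\textbf{Step 2: apply the defining identity of $\mathcal Q_T$.} Proposition~\ref{p:weighted-right-inverse} gives
\[
d_H\bigl(\mathcal Q_T(\gamma)\vartheta\bigr)=\gamma-\Theta\wedge\iota_\xi\gamma .
\]
By Proposition~\ref{prop-nonlinear-solution}, $\mathcal Q_T(\gamma)=\Psi_T(f_T)=0$, so the left side vanishes. Hence the horizontal part of $\gamma$ is zero. The horizontal part of $\Omega_0=\Theta\wedge\vartheta$ is also zero. Writing $\Phi_{f_T}^*\Omega_M=\gamma+\Omega_0$, the decomposition into $\Theta\wedge(\text{horizontal})+\text{horizontal}$ is unique, since the horizontal part is $\eta-\Theta\wedge\iota_\xi\eta$. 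It follows that $\beta_T=0$.

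\textbf{Step 3: conclude pluriharmonicity.} The form $\Phi_{f_T}^*\Omega_M$ is a nowhere vanishing holomorphic $(2,0)$-form for $J':=\Phi_{f_T}^*J_M$ on $\mathcal U_T$. We have
\[
\Theta\wedge\Phi_{f_T}^*\Omega_M=\Theta\wedge\beta_T=0 .
\]
Since the $(1,0)$-forms of $J'$ are exactly the 1-forms whose wedge with a nonvanishing $(2,0)$-form is zero, $\Theta$ is of type $(1,0)$ for $J'$. Moreover $d\Theta=0$, so $\Theta$ is a closed $(1,0)$-form.

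Locally $\Theta=d\log w$ with $w$ the fiber coordinate of a parallel unitary frame. Hence $\log w$ is $J'$-holomorphic, because $\bar\partial_{J'}\log w=(d\log w)^{0,1}=0$. Since $|e|_h=1$, we have $z=-\log|w|^2=-2\operatorname{Re}\log w$. This is the real part of a holomorphic function, hence pluriharmonic.

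The only delicate point is Step 1: checking that $\gamma$ genuinely lies in $\mathcal D^k_{\delta,T}$, so that the identity of Proposition~\ref{p:weighted-right-inverse} applies. This was already established in the proof of Proposition~\ref{prop-nonlinear-solution}. The rest is formal.
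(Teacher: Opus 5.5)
Your proposal is correct and follows the paper's proof essentially verbatim. You apply the identity $d_H(\mathcal Q_T(\gamma)\vartheta)=\gamma-\Theta\wedge\iota_\xi\gamma$ to $\gamma=\Phi_{f_T}^*\Omega_M-\Omega_0$ with $\mathcal Q_T(\gamma)=0$, use that $\Omega_0$ has no horizontal part, and deduce from $\Theta\wedge\Phi_{f_T}^*\Omega_M=0$ that the closed form $\Theta$ is of type $(1,0)$, so $z$ is pluriharmonic. Your extra details (membership of $\gamma$ in $\mathcal D^k_{\delta,T}$, uniqueness of the decomposition, and $z=-2\operatorname{Re}\log w$ locally) only spell out what the paper leaves implicit.
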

\begin{proof}
    Proposition~\ref{p:weighted-right-inverse} and Proposition~\ref{prop-nonlinear-solution} give
\[
0=d_H\bigl(\mathcal Q_T(\Phi_{f_T}^*\Omega_M-\Omega_0)\vartheta\bigr)
=\Phi_{f_T}^*\Omega_M-\Omega_0
-\Theta\wedge\iota_\xi(\Phi_{f_T}^*\Omega_M-\Omega_0).
\]
Since $\Omega_0$ has no horizontal
component, we obtain
\[
\beta_T=\Phi_{f_T}^*\Omega_M-\Theta\wedge\iota_\xi\Phi_{f_T}^*\Omega_M=0.
\]
Since $\Phi_{f_T}^*\Omega_M$ is a nowhere vanishing $(2,0)$-form
for $\Phi_{f_T}^*J_M$, the identity
$\Theta\wedge\Phi_{f_T}^*\Omega_M=0$ implies that $\Theta$ is
of type $(1,0)$ for this complex structure. Together with
\[
    dz=-(\Theta+\bar\Theta),\qquad d\Theta=0,
\]
this proves the asserted pluriharmonicity.
\end{proof}

\section{Completion of the proof}\label{sek}
\subsection{Proof of Theorem \ref{thm:dps-semipositivity}}
Theorem~\ref{thm--existence of plh function} gives us that the function $z=-\log|\zeta|^2_h$ is $\Phi_{f_T}^*(J_M)$-pluriharmonic. Returning to the original surface, let $\Upsilon$ denote the
identification constructed in Proposition~\ref{p:vertical_disc},
and set
\[
    A:=\Upsilon\circ\Phi_{f_T}.
\]
Since $f_T$ is uniformly bounded, the image of $\Phi_{f_T}$
contains a smaller punctured disk bundle. Thus, after shrinking
a neighborhood $U$ of $D$, the function
\[
    r:=z\circ A^{-1}
\]
is defined on $U\setminus D$.

Since $k\geq4$, Sobolev embedding gives $A\in C^2$ on the
interior of the tail. Hence $r\in C^2(U\setminus D)$, and Theorem \ref{thm--existence of plh function} gives
\[
    \partial_{J_M}\bar\partial_{J_M}r=0.
\]
Taking the trace with respect to a smooth local Hermitian metric
and applying elliptic regularity shows that $r$ is smooth.

Finally, in a local parallel fiber coordinate $w$, the first
identification satisfies
\[
    w_M\circ\Upsilon(p,w)=c(p)w+O(|w|^2),
    \qquad c(p)\neq0,
\]
for every local holomorphic defining function $w_M$ of $D$.
Since $|e^{f_T}|$ is uniformly bounded above and below, this gives a constant $C>0$ satisfying
\[
   C^{-1} |w|\leq |w_M\circ A(p,w)|\leq C |w|.
\]
Consequently, the function $r$ is defined on $X=M\setminus D$ on a punctured neighborhood of $D$, is pluriharmonic with respect to $J_M$, and 
\[
    r+\log|w_M|^2
\]
is locally bounded near $D$. Hence $r+\log|w_M|^2$ extends to a pluriharmonic function across $D$, and by the Poincar\'e-Lelong formula, $F:=-r$ (extended by $-\infty$ on $D$) is a plurisubharmonic function on some neighborhood $U$ of $D$ in $M$, which is comparable to $\log|w_M|^2$ and satisfies $dd^c F=[D]$. The first conclusion of Theorem \ref{thm:dps-semipositivity} then follows from Proposition \ref{ued}, (2)$\Rightarrow$(3).

To deduce from this that $\mathcal{O}_M(D)$ is Hermitian semipositive, we recall the simple argument from \cite[p.442]{brunella}. Adding a constant to $F$, we may assume that $\{F=1\}\subset U$ is nonempty, and we can replace $U$ with $\{F<1\}$. We then pick a smooth concave function $\varphi:\mathbb{R}\to\mathbb{R}$ with $\varphi(t)=0$ for $t\geq 0$ and $\varphi(t)=t+c$ for $t\leq -1$, for some constant $c$, then the function $\tilde{F}:M\to\mathbb{R}\cup\{-\infty\}$ given by $\widetilde{F}=\varphi\circ F$ on $U$, and identically zero outside $U$, is smooth and plurisuperharmonic on $U\backslash D$, and still satisfies
\begin{equation}
dd^c \widetilde{F}=[D],
\end{equation}
in some neighborhood of $D$, still denoted by $U$. Pick a defining section $s_D$ of $D$, and then
 on $X$ we define
\[
        |s_D|_h^2=e^{\widetilde F}.
\]
From this, it is immediate to see that this defines a smooth Hermitian metric on the line bundle over $M$ with semipositive curvature. See also the proof of Proposition \ref{ued}.

\subsection{Proof of Theorem \ref{thm:surface-semipositivity-classification}}
As we will see later, the key point is to classify rational surfaces with semipositive anticanonical bundle. In particular, \(K^{-1}_X\) is nef. If \(K^{-1}_X\) is also big, then it is semiample by Kawamata's basepoint-free theorem. We will therefore focus on the remaining case in which $X$ is rational and $K^{-1}_X$ is nef with $K_X^2=0.$

Following Sakai \cite{Sakai2001}, we call a smooth projective rational surface $X$ a
\emph{generalized Halphen surface} if it admits an effective
anticanonical divisor
\[
       D=\sum_i m_iD_i\in|K^{-1}_X|,\qquad m_i>0,
\]
of canonical type, which means that $D\mathbin{\cdot}D_i=0$ for every
irreducible component $D_i$
\cite[\S4, Definitions~1--2,
p.~180]{Sakai2001}.  Whenever needed, we fix such a divisor $D$. We use
Sakai's elliptic, multiplicative, and additive terminology
\cite[\S4 and Table~2]{Sakai2001}.
A \emph{Halphen surface of index $m$} is a generalized Halphen surface for which $\dim |-mK_X|>0$, and $m$ is the least positive integer such that this happens.

The following proposition shows that the surfaces we are interested in satisfy the generalized Halphen condition.

\begin{proposition}
\label{prop:nef-anticanonical-halphen}
Let $X$ be a smooth projective rational surface.  The following are
equivalent:
\begin{enumerate}
\item[\textup{(i)}] $K^{-1}_X$ is nef and $K_X^2=0$;
\item[\textup{(ii)}] $X$ is a generalized Halphen surface.
\end{enumerate}
When these conditions hold, $\vert K^{-1}_X\vert\neq\varnothing$, and every effective
anticanonical divisor on $X$ is of canonical type.  
\end{proposition}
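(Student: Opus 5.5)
The plan is to prove the nonemptiness of $|K^{-1}_X|$ first, then show that every effective anticanonical divisor is of canonical type, and finally get the equivalence of (i) and (ii) from these two facts.

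\textbf{Step 1: $|K^{-1}_X|\neq\varnothing$.} We use Riemann--Roch on the rational surface $X$, for which $\chi(\calO_X)=1$:
\[
h^0(-K_X)-h^1(-K_X)+h^2(-K_X)=1+K_X^2=1.
\]
By Serre duality, $h^2(-K_X)=h^0(2K_X)=0$, since $X$ is rational. Hence $h^0(-K_X)\geq 1$, and $|K^{-1}_X|\neq\varnothing$ under (i). The first half of this computation does not use nefness, so it also holds under (ii) once we know $K_X^2=0$.

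\textbf{Step 2: every effective anticanonical divisor is of canonical type.} Assume (i) and let $D=\sum m_iD_i\in|K^{-1}_X|$. Since $-K_X$ is nef, $D\cdot D_i\geq 0$ for each $i$. Moreover
\[
0=K_X^2=D\cdot D=\sum_i m_i\,(D\cdot D_i).
\]
This is a sum of nonnegative terms with positive coefficients $m_i$, so each term vanishes, i.e.\ $D\cdot D_i=0$ for all $i$. Thus $D$ is of canonical type, and in particular (i)$\Rightarrow$(ii).

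\textbf{Step 3: (ii)$\Rightarrow$(i).} Suppose $D=\sum m_iD_i\in|K^{-1}_X|$ is of canonical type. Let $C$ be an irreducible curve.
\begin{itemize}
\item If $C$ is not a component of $D$, then $-K_X\cdot C=D\cdot C\geq 0$.
\item If $C=D_i$ for some $i$, then $-K_X\cdot C=D\cdot D_i=0$.
\end{itemize}
So $-K_X$ is nef. Also $K_X^2=D\cdot D=\sum_i m_i\,(D\cdot D_i)=0$. Once (i) holds, Step 2 applies to every effective anticanonical divisor, which gives the last assertion of the proposition.

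The argument is elementary, and I expect no real obstacle. The only point needing care is the rationality input in Step 1, namely $\chi(\calO_X)=1$ and $h^0(2K_X)=0$; both hold because the plurigenera of a rational surface vanish. Sakai's definition of canonical type has no further conditions beyond $D\cdot D_i=0$, though some conventions also require $D$ to be connected. If that were needed, connectedness would follow from $h^0(\calO_D)=1$, using the exact sequence
\[
0\to\calO_X(K_X)\to\calO_X\to\calO_D\to 0
\]
together with $h^1(K_X)=q=0$, as for a rational surface.
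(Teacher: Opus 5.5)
Your proof is correct and follows essentially the same route as the paper's. You use Riemann--Roch with $\chi(\calO_X)=1$ and $h^2(-K_X)=h^0(2K_X)=0$ to get $h^0(-K_X)\geq 1$, then vanishing of the weighted sum $\sum_i m_i(D\cdot D_i)=D^2=0$ of nonnegative terms to force canonical type, and the component/non-component case split for nefness in the converse; your connectedness remark via $H^1(X,K_X)=0$ is an extra that the paper's definition of canonical type does not require.
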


\begin{proof}
If $D=\sum_i m_iD_i$ is of canonical type, then
$D^2=\sum_i m_i(D\mathbin{\cdot}D_i)=0$, and $D$ is nef because it has
zero intersection with its components and nonnegative intersection with
every other irreducible curve.  Conversely, if $K^{-1}_X$ is nef and
$K_X^2=0$, Riemann--Roch and Serre duality on the rational surface $X$
give $h^0(X,K^{-1}_X)\geq1$.  For any
$D=\sum_i m_iD_i\in|K^{-1}_X|$, all $D\mathbin{\cdot}D_i$ are nonnegative
and their weighted sum is $D^2=0$; hence they all vanish.  
\end{proof}

Using Sakai's classification together with Koike's results \cite{Koike2023}, we classify all generalized Halphen surfaces whose anticanonical line bundles are semipositive.
\begin{proposition}
\label{prop:generalized-halphen-criterion}
Let $X$ be a generalized Halphen surface, fix an anticanonical divisor
$D$ of canonical type.  Then
$K_X^{-1}=\mathcal{O}_X(D)$ is nef of numerical dimension one, and the following
assertions hold.
\begin{enumerate}
\item[\textup{(A)}] In the case of elliptic type, the divisor $D$ is a
smooth elliptic curve and $K_X^{-1}$ is always Hermitian semipositive.
It is semiample exactly when $N_{D}$ is torsion.
\item[\textup{(B)}] In the case of multiplicative types,
the divisor $D$ is a reduced cycle of rational curves and
$\operatorname{Pic}^0(D)\simeq\C^*$.  With $q_D$ defined as in
Theorem~\ref{thm:surface-semipositivity-classification},
\[
 K_X^{-1}\text{ is Hermitian semipositive}
 \quad\Longleftrightarrow\quad |q_D|=1.
\]
It is semiample exactly when $q_D$ is a root of unity.
\item[\textup{(C)}] In the case of additive types,
the following are equivalent:
\begin{enumerate}
\item[\textup{(i)}] $K_X^{-1}$ is Hermitian semipositive;
\item[\textup{(ii)}] $K_X^{-1}$ is semiample;
\item[\textup{(iii)}] $X$ is a Halphen surface of index one, so
$|K^{-1}_X|$ is a basepoint-free genus-one pencil.
\end{enumerate}
\end{enumerate}
\end{proposition}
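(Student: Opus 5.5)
We first treat the common part of the statement and then the three types in turn. Since $D$ is of canonical type, $D^2=0$, and by Proposition~\ref{prop:nef-anticanonical-halphen} the bundle $K_X^{-1}=\mathcal O_X(D)$ is nef. It is not numerically trivial, because $X$ is rational and so $K_X\not\equiv0$. Hence its numerical dimension is exactly one.

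Next we recall Sakai's description of the possible $D$ \cite[\S4, Table~2]{Sakai2001}.
\begin{itemize}
\item In the elliptic type, $D$ is a smooth elliptic curve.
\item In the multiplicative type, $D$ is a reduced cycle of rational curves, with $\operatorname{Pic}^0(D)\simeq\C^*$.
\item In the additive type, $D$ is either a cuspidal curve, two curves tangent at a point, three concurrent lines, or a non-reduced Kodaira configuration, with $\operatorname{Pic}^0$ of the reduced cycle isomorphic to $\C$.
\end{itemize}
Semiampleness is governed by the torsion order of $N_D=\mathcal O_D(D)$, in the following way. The sequence $0\to\mathcal O_X((m-1)D)\to\mathcal O_X(mD)\to\mathcal O_D(mD)\to0$ holds, and $H^1(X,\mathcal O_X)=0$. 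Together with Riemann--Roch (which gives $\chi(mD)=1$), this shows that $h^0(mD)\geq2$ if and only if $\mathcal O_D(mD)$ is trivial for some $m$. If $\dim|mD|\ge1$, the pencil $|mD|$ has no fixed part meeting $D$ negatively, since $D^2=0$, and so it is basepoint-free. Conversely, if no power of $N_D$ is trivial, then $h^0(mD)=1$ for all $m$ and $K_X^{-1}$ is not semiample. This gives the semiampleness criteria in (A) and (B).

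In case (A), the divisor $D$ is a smooth connected anticanonical curve, so Theorem~\ref{thm:dps-semipositivity} directly gives Hermitian semipositivity.

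In case (B), the key input is Koike's theorem \cite{Koike2023} on neighborhoods of cycles of rational curves.
\begin{itemize}
\item If $|q_D|\neq1$, then $N_D$ is not unitary flat. Any semipositive metric on $\mathcal O_X(D)$ would, after restriction and using that the curvature integrates to $D\cdot D_i=0$ on each component, give a flat metric on $N_D$; hence such a metric cannot exist. More precisely, the curvature form is semipositive with zero integral on each $D_i$, so it vanishes along $D$, and a flat Hermitian metric on $\mathcal O_D(D)$ forces $|q_D|=1$.
\item If $|q_D|=1$, we quote Koike's result that, for anticanonical cycles in rational surfaces, a neighborhood on which $\mathcal O_X(D)$ is unitary flat exists without any Diophantine assumption. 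If his statement carries an arithmetic hypothesis, the fallback is to adapt the argument of Sections~3--5 using the log volume form with simple poles along the cycle.
\end{itemize}
Once a unitary flat neighborhood is available, Brunella's gluing argument from Section~\ref{sek} yields a semipositive metric on $\mathcal O_X(D)$.

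In case (C), the implications (iii)$\Rightarrow$(ii)$\Rightarrow$(i) are standard: a basepoint-free pencil pulls back the Fubini--Study metric. For (i)$\Rightarrow$(iii), a semipositive metric restricts to a flat metric on $\mathcal O_D(D)$ as above. Since $\operatorname{Pic}^0(D_{\rm red})\simeq\C$ has no nontrivial unitary characters, this flat metric forces $N_{D_{\rm red}}$ to be trivial. One then has to upgrade this to $\dim|K^{-1}_X|\geq1$, i.e.\ index one, which is exactly the content of \cite{Koike2023,Koike2026} for additive types. The plan is to cite Koike for this upgrade, in the non-reduced cases as well.

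The main obstacle is (B) when $q_D$ is not Diophantine, and the index-one conclusion in (C). In both places we rely on Koike's results; if they are insufficient in (B), we extend the method of Sections~3--5 to nodal cycles.
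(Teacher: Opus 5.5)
\textbf{Main gap: the implication $|q_D|=1\Rightarrow$ semipositive in (B).} This is the only non-routine part of (B), and your plan for it is a placeholder. You propose to quote Koike for a ready-made statement that an anticanonical cycle with $|q_D|=1$ has a unitary flat neighborhood. But the result the paper uses, \cite[Theorem~3.1]{Koike2023}, is a statement about a holomorphic \emph{family} of pairs $(X_s,Y_s)$ satisfying Assumptions~1--5, with Assumption~5 imposed at every $s\in U(1)$. To apply it to an arbitrary multiplicative-type $(X,D)$ you must do two things.
\begin{enumerate}
\item[(a)] Realize $(X,D)$ as a member of such a family. The paper contracts a $(-1)$-curve $E$; by the canonical-type condition and $D\cdot E=1$, $E$ meets $D$ transversely at one smooth point of a component $D_1$. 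This gives $(Z,D_Z)$ with $D_Z\in|K_Z^{-1}|$ and $N_{D_Z/Z}$ of multidegree $(1,0,\ldots,0)$. One then blows up $Z$ at a point $p(s)$ moving along $C_1\setminus\operatorname{Sing}(D_Z)\simeq\mathbb{C}^*$. This yields $N_{Y_s/X_s}\simeq\beta(s)$ with $\beta:\mathbb{C}^*\to\operatorname{Pic}^0(D_Z)$ an isomorphism, so that $|q_D|=1$ if and only if $s_0\in U(1)$.
\item[(b)] Verify ``infinite type'' at \emph{every} unitary parameter. For non-torsion $s$ this comes from $H^1(Y_s,N_{Y_s/X_s}^{-n})=0$. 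For torsion $s$ of order $m$ it goes through Assumption~$5''$: one builds the fibration $\phi_s=[s_{Y_s}^m:t_s]$ with $\phi_s^*(0)=mY_s$, where $t_s$ lifts $1\in H^0(\mathcal{O}_{Y_s})$.
\end{enumerate}
Only then does Theorem~3.1 give defining functions with transition constants in $\{1,s,s^{-1}\}$, hence unitary flatness for all $s\in U(1)$. None of this is in your plan.

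Your fallback, running Sections~3--5 on a nodal cycle, is not viable as stated. Lemma~\ref{l:flat_isometry} and Proposition~\ref{p:weighted-right-inverse} use that $D$ is a flat torus with a parallel field $v$ (so $\Delta_{\bL}=P^*P=PP^*$). They also use the model $L^\times\simeq\mathbb{T}^3\times\mathbb{R}$ and a fiberwise Fourier decomposition over a smooth base. All of these break down at the nodes.

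\textbf{What is fine, and minor fixes.} Your necessity argument in (B) is correct and more elementary than the paper's route. Pulling the curvature back to each normalized component, it is semipositive with integral $D\cdot D_i=0$, so it vanishes there. The restricted metric then has constant norm on each component, which forces $|q_D|=1$. The paper instead deduces necessity from Koike's criterion \cite[Theorem~1.1]{Koike2026}.

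There are three smaller problems.
\begin{itemize}
\item In your semiampleness criterion, ``$\dim|mD|\ge1$ implies $|mD|$ basepoint-free'' is false when the torsion order $m_0\ge2$ and $m$ is not a multiple of $m_0$. For example, $|(m_0+1)D|=D+|m_0D|$. Take $m=m_0$ and use $H^1(X,kD)=0$ for $k<m_0$ to lift the constant section of $N_D^{m_0}$, as the paper does.
\item In (C) you rely on Koike for the key upgrade, as the paper does via \cite[Theorems~1.1,~1.2]{Koike2026} after noting $H_1(D_{\mathrm{red}},\mathbb{Z})=0$. You still need the chain: semiample $\Rightarrow$ $N_D$ torsion $\Rightarrow$ $N_D$ trivial (since $\operatorname{Pic}^0(D)\simeq(\mathbb{C},+)$) $\Rightarrow$ $h^0(X,D)=2$ with $|D|$ basepoint-free. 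For non-reduced $D$, the first step needs the Nakayama argument rather than your section-restriction argument.
\item For the non-reduced additive types, $\operatorname{Pic}^0(D_{\mathrm{red}})$ is trivial, not $\mathbb{C}$.
\end{itemize}
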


\begin{proof}[Proof of Proposition~\ref{prop:generalized-halphen-criterion}]
The list of elliptic, multiplicative, and additive types is Sakai's
classification \cite[\S4 and Table~2]{Sakai2001}.  Since $D$ is of
canonical type, it is nef, $D^2=0$, and $\operatorname{nd}(\mathcal{O}_X(D))=1$.  Hence
\cite[Theorem~1.1]{Koike2026} gives
\begin{equation}
 \label{eq:halphen-koike-criterion}
 \mathcal{O}_X(D)\text{ is Hermitian semipositive}
 \quad\Longleftrightarrow\quad
 \mathcal{O}_X(D)|_V\text{ is unitary flat for some neighborhood }V\supset D.
\end{equation}

We first use a short exact sequence argument to show that if  $D$ is
either a smooth elliptic curve or a reduced rational cycle, $\mathcal{O}_X(D)$ is semiample if and only if $N_D$ is torsion. Suppose
$N_D:=\calO_D(D)$ is torsion of order exactly $m$.  Then duality gives
$H^0(D,N_D^k)=H^1(D,N_D^k)=0$ for $1\leq k<m$.  The sequences
\begin{equation}\label{esatta}
 0\longrightarrow\calO_X((k-1)D)
 \longrightarrow\calO_X(kD)
 \longrightarrow N_D^k\longrightarrow0
\end{equation}
and $H^1(X,\calO_X)=0$ then give, by induction,
$h^0(X,kD)=1$ and $H^1(X,kD)=0$ for $0\leq k<m$.  For $k=m$, the
constant section of $N_D^m\simeq\calO_D$ lifts; together with $s_D^m$
it generates $\calO_X(mD)$.  Thus $|mD|$ is basepoint-free and no
smaller multiple moves.  Conversely, let $D$ be a connected divisor of canonical type,
possibly nonreduced. If $\mathcal{O}_X(D)$ is semiample, then for some $n>0$
the line bundle
\[
    N_D^n=\calO_D(nD)
\]
is globally generated. Let
$\varphi:D\to\mathbb P^r$ be the morphism defined by a set of
generating sections.
Since
\[
    \deg(N_D^n|_{D_i})=nD\mathbin{\cdot}D_i=0
\]
for every irreducible component $D_i$ of $D_{\mathrm{red}}$,
the restriction of $\varphi$ to each component is constant.
Connectedness therefore implies that
$\varphi(D_{\mathrm{red}})$ is a single point.
Choose a homogeneous linear form on $\mathbb P^r$ that does not
vanish at this point. Its pullback is a section of $N_D^n$
which is nowhere zero on $D_{\mathrm{red}}$.
By Nakayama's lemma, this section generates $N_D^n$ at every
point of $D$, including when $D$ is nonreduced.
Hence $N_D^n\simeq\calO_D$.

In elliptic type, $D$ is a smooth elliptic curve and
Theorem~\ref{thm:dps-semipositivity} gives semipositivity, while the
preceding argument gives semiampleness exactly when $N_D$ is torsion.

In multiplicative type, $D$ is a reduced cycle of rational curves and
$\operatorname{Pic}^0(D)\simeq\C^*$, so
\eqref{eq:halphen-koike-criterion} makes $|q_D|=1$ necessary.  For the
converse, we apply \cite[Theorem~3.1]{Koike2023} through the general
moving-blow-up configuration of \cite[\S4.2]{Koike2023}.

Since the surface $X$ is not minimal, we
choose a $(-1)$-curve $E$.  Then we have
\[
        D\mathbin{\cdot}E=-K_X\mathbin{\cdot}E=1.
\]
The curve $E$ cannot be a component of $D$ because of the canonical-type
condition.  Because $D$ is a
reduced nodal cycle, total intersection number one forces $E$ to meet
$D$ transversely at a single smooth point.
Let $\rho:X\to Z$ contract $E$, put $p:=\rho(E)$, and set
$D_Z:=\rho(D)$.  Then $D$ is the strict transform of $D_Z$, which is smooth at $p$.  Thus $\rho|_D:D\to D_Z$ is an isomorphism and
\[
 K_X=\rho^*K_Z+E,\qquad \rho^*D_Z=D+E,
\]
so $D_Z\in|K^{-1}_Z|$.  Write $D_1$ for the component meeting $E$ and
$C_i:=\rho(D_i)$.  Then $\rho^*C_1=D_1+E$, while
$\rho^*C_i=D_i$ for $i\ne1$.  Since
$D\mathbin{\cdot}D_i=0$ for every $i$, we obtain
\[
 D_Z\mathbin{\cdot}C_1
   =1,\quad
 D_Z\mathbin{\cdot}C_i
   =0\qquad(i\ne1).
\]
Thus $L:=N_{D_Z/Z}$ has multidegree $(1,0,\ldots,0)$.  Restricting
$\rho^*D_Z=D+E$ to $D$ also gives, under $D\simeq D_Z$,
\[
        N_D\simeq L\otimes\calO_{D_Z}(-p).
\]
Put $\Sigma:=C_1\setminus\operatorname{Sing}(D_Z)$.
Since $L$ has multidegree $(1,0,\ldots,0)$, one has
$h^0(D_Z,L)=1$, and its nonzero section has a unique simple
zero $a\in\Sigma$. Choose the coordinate $s$ on the normalization
of $C_1$ which sends the two attaching points to $0,\infty$
and $a$ to $1$, thereby identifying $\Sigma$ with $\C^*$.
Write $p(s)\in\Sigma$ for the point with coordinate $s$.
Koike shows that
\[
 \beta:\C^*\longrightarrow\operatorname{Pic}^0(D_Z),
 \qquad
 \beta(s)=L\otimes\calO_{D_Z}(-p(s)),
\]
is an algebraic isomorphism with $\beta(1)=\calO_{D_Z}$
\cite[\S4.2, pp.~259--260]{Koike2023}.

Using the identification $\Sigma\simeq\C^*$, let
\[
 \Gamma:=\{(p(s),s):s\in\Sigma\}\subset Z\times\Sigma,
 \qquad
 \mathcal X:=\operatorname{Bl}_{\Gamma}(Z\times\Sigma).
\]
Let $\mathcal Y$ be the strict transform of $D_Z\times\Sigma$.
Since $\Gamma$ is Cartier in $D_Z\times\Sigma$, the blow-up induces
an isomorphism $\mathcal Y\simeq D_Z\times\Sigma$.
Thus this is the configuration of \cite[\S4.2]{Koike2023}.
The fibers of $\mathcal X\to\Sigma$ satisfy
\[
        X_s=\operatorname{Bl}_{p(s)}Z,
        \qquad
        Y_s\in|K^{-1}_{X_s}|,
        \qquad
        N_{Y_s/X_s}\simeq\beta(s),
\]
where $Y_s$ is the strict transform of $D_Z$. We restrict this family to the open neighborhood
$S\subset\C^*$ of $U(1)$ used in \cite[\S4.2]{Koike2023}.
For the parameter $s_0$ with $p(s_0)=p$, one has
$(X,D)\simeq(X_{s_0},Y_{s_0})$ and $N_D\simeq\beta(s_0)$.
After transporting $\alpha_D$ through $D\simeq D_Z$, the automorphism
$\alpha_D\circ\beta$ of $\C^*$ fixes $1$, so it is $s\mapsto s$ or
$s\mapsto s^{-1}$.  Consequently
\[
        |q_D|=1\quad\Longleftrightarrow\quad s_0\in U(1),
\]
and the torsion order of $q_D$ equals that of $s_0$.

Assumption~5 requires $(Y_s,X_s)$ to be of infinite type for
every $s\in U(1)$. For non-torsion $s$, this follows from
\[
 H^1(Y_s,N_{Y_s/X_s}^{-n})=0
 \qquad(n\geq1),
\]
as explained in \cite[\S4.2.5]{Koike2023}.
For torsion parameters, we verify the sufficient condition
in Assumption~$5''$: a positive multiple of $Y_s$ is a fiber
of a proper holomorphic map on a neighborhood of $Y_s$.
Let $s$ have exact order $m$.
The surface $Z$ is rational, hence
every $X_s$ is rational; moreover, $Y_s$ is a reduced rational cycle
with $\omega_{Y_s}\simeq\calO_{Y_s}$.  The exact-sequence argument
above, applied to $(X_s,Y_s)$, gives
\[
 h^0(X_s,kY_s)=1\quad(1\leq k<m),
 \qquad
 h^0(X_s,mY_s)=2,
\]
and $|mY_s|$ is basepoint-free.  Let $t_s$ be the lift of
$1\in H^0(Y_s,\calO_{Y_s})$ furnished by the restriction sequence.
The two sections $s_{Y_s}^m,t_s$ form a basis and define
\[
        \phi_s=[s_{Y_s}^m:t_s]:X_s\longrightarrow\mathbb P^1.
\]
Consequently
\[
        \phi_s^*\{[0:1]\}=\operatorname{div}(s_{Y_s}^m)=mY_s
\]
scheme-theoretically. Restricting $\phi_s$ over a sufficiently small
disc $B_s$ around $[0:1]$, with that point identified with $0$, gives
a surjective proper holomorphic map
\[
        b_s:V_s:=\phi_s^{-1}(B_s)\longrightarrow B_s,
        \qquad b_s^*\{0\}=mY_s,
\]
where $V_s$ is a neighborhood of $Y_s$.  This is precisely
Assumption~$5''$ of \cite[\S4.2, p.~260]{Koike2023}.

Koike verifies Assumptions~1--4 for this moving blow-up family in
\cite[\S\S4.2.1--4.2.4, pp.~260--262]{Koike2023}.
The verification of Assumption~$5''$ at the torsion parameters above,
together with the non-torsion argument in
\cite[\S4.2.5, p.~263]{Koike2023}, establishes Assumption~5.
Thus \cite[Theorem~3.1, pp.~252--253]{Koike2023} gives, for every
$s\in U(1)$, local defining functions $\widehat w_{j,s}$ for $Y_s$
satisfying
\[
    \widehat w_{j,s}=t_{jk}(s)\widehat w_{k,s}.
\]
For this family, the conormal transition constants in
\cite[\S4.2]{Koike2023} satisfy
\[
    t_{jk}(s)\in\{1,s,s^{-1}\}.
\]
Consequently, when $s\in U(1)$, all these transition constants
have modulus one, so $[Y_s]$ is unitary flat on a neighborhood
of $Y_s$. Since $|q_D|=1$ implies $s_0\in U(1)$ and
$(X,D)\simeq(X_{s_0},Y_{s_0})$, it follows that $\mathcal{O}_X(D)$ is
unitary flat on a neighborhood of $D$. By
\eqref{eq:halphen-koike-criterion}, $\mathcal{O}_X(D)$ is therefore
Hermitian semipositive.

Finally, in additive type put $Y=D_{\mathrm{red}}$.  For every additive
Sakai type, the reduced support $Y$ has no topological cycle; equivalently,
$H_1(Y,\mathbb Z)=0$ \cite[\S4, p.~181]{Sakai2001}.  Thus every
unitary-flat line bundle on $Y$ has trivial monodromy and is holomorphically
trivial.
Consequently \cite[Theorems~1.1 and~1.2]{Koike2026} show that $\mathcal{O}_X(D)$ is
Hermitian semipositive exactly when it is semiample.  In this type the
additive period group is
$\operatorname{Pic}^0(D)\simeq(\C,+)$ \cite[\S4]{Sakai2001}.  If
$\mathcal{O}_X(D)$ is semiample, the preceding argument makes
$N_D:=\calO_D(D)$ torsion, hence trivial.  Since
$H^1(X,\calO_X)=0$, the sequence
\[
 0\longrightarrow\calO_X\longrightarrow\calO_X(D)
 \longrightarrow N_D\simeq\calO_D\longrightarrow0
\]
lifts $1\in H^0(D,\calO_D)$.  This lift and $s_D$ generate
$\calO_X(D)$.  Since a minimal rational surface has positive
$K^2$, the surface $X$ has a $(-1)$-curve $E$; adjunction gives
$D\mathbin{\cdot}E=1$.  Hence $|K^{-1}_X|=|D|$ is a basepoint-free
genus-one pencil and $X$ has index one.
\end{proof}

\begin{proof}[Proof of
Theorem~\ref{thm:surface-semipositivity-classification}]
First we suppose that we are in one of cases (A)--(D), and we show that $K_M^{-1}$ is Hermitian semipositive.

Case~\textup{(A)} follows by pulling back a Fubini--Study metric from a
globally generated power and taking its root.  Cases~\textup{(B)} and
\textup{(C)} follow from
Proposition~\ref{prop:generalized-halphen-criterion}.

For~\textup{(D)}, write $\pi:M\to B$ and $E:=\calO_B\oplus L$.  Since
$E$ is unitary flat, the relative Fubini--Study metric and
\[
 K_M^{-1}\simeq
 \calO_{\mathbb P_B(E)}(2)\otimes\pi^*(\det E)^{-1}
\]
give a semipositive metric.  Moreover,
\[
 h^0(M,K_M^{-m})
 =\sum_{j=0}^{2m}h^0(B,L^{j-m})=1
 \qquad(m\geq1),
\]
so $K_M^{-1}$ is not semiample.\\

Conversely, we suppose that $K_M^{-1}$ is Hermitian semipositive but not
semiample, and we wish to show that we must be in cases (B)--(D).  

Clearly, $K^{-1}_M$ is nef.  If $\kappa(M)\geq0$, then 
$mK_M\simeq\calO_M$, a contradiction.  Thus $\kappa(M)=-\infty$.
The Enriques--Kodaira classification
\cite[Chapter~VI, Theorem~1.1]{BHPV2004} now makes $M$ rational or a
blow-up of a ruled surface, and in particular projective.

In either case $K^{-1}_M\not\equiv0$.  Hence our nef,
non-semiample $K^{-1}_M$ is bad in the sense of
\cite[Definition--Proposition~1.4]{BauerPeternell2004}, and the
classification \cite[Proposition~1.6]{BauerPeternell2004} leaves two
possibilities.  In the rational case, Propositions
\ref{prop:nef-anticanonical-halphen} and
\ref{prop:generalized-halphen-criterion} give precisely
\textup{(B)} and~\textup{(C)}.  Otherwise
$M\simeq\mathbb P_B(E)$ over an elliptic curve and, after twisting $E$,
\[
       0\longrightarrow\calO_B\longrightarrow E
       \longrightarrow L\longrightarrow0,
       \qquad L\in\operatorname{Pic}^0(B),
\]
where either $L\simeq\calO_B$ and the extension is non-split, or $L$ is
non-torsion.  The non-split extension with $L\simeq\calO_B$ has nef but
non-semipositive anticanonical bundle
\cite[Corollary~2.5.2]{DPS2001}, so it cannot occur here.  If $L$ is
non-torsion, then
\[
 \operatorname{Ext}^1(L,\calO_B)\simeq H^1(B,L^{-1})=0,
\]
and the extension splits.  We obtain precisely case~\textup{(D)}.

Finally, we show that cases (A)--(D) are all distinct. In cases \textup{(B)}--\textup{(D)} we have that $K_M^{-1}$ is not semiample, and in case
\textup{(D)} we have that $M$ is not rational.  In \textup{(B)} and~\textup{(C)}, the
restriction sequence and $H^0(D,N_D)=0$ show that $D$ is the unique
anticanonical divisor; since it is smooth elliptic in case (B) and a cycle in case (C), the four alternatives are indeed pairwise disjoint.
\end{proof}

\subsection{Proof of Remark \ref{rem:non-kahler-surfaces}}\label{sec:proof of remark}

Indeed, suppose that $K_M^{-1}$ admits a smooth Hermitian metric $h$ with nonnegative curvature $\alpha:=R_h\geq 0.$
Choose a Gauduchon metric with fundamental form $\omega$ \cite{Gauduchon1977}.  If
$\kappa(M)\geq0$, take $m>0$ and a nonzero section of $mK_M$, with zero
divisor $C$.  Poincar\'e--Lelong and
$\partial\bar\partial\omega=0$ give
\[
 0\leq\int_C\omega
   =\deg_\omega(mK_M)
   =-m\int_M\alpha\wedge\omega\leq0.
\]
Thus $C=0$, $mK_M\simeq\calO_M$, and $\alpha=0$.  The non-K\"ahler
surfaces in this case are precisely the primary and secondary Kodaira
surfaces; see \cite[Chapter~V, \S5 and Chapter~VI,
Theorem~1.1]{BHPV2004}.  Conversely, their canonical bundles are torsion
and hence Hermitian flat.

If $\kappa(M)=-\infty$, then $M$ is of class~VII.  We then have $c_1(M)^2=-b_2(M)$ and hence $b_2(M)=0$ and $M$ is minimal.  Bogomolov's classification now makes
$M$ a Hopf or an Inoue surface \cite{Bogomolov76,LYZ90, LYZ94,Teleman94}.  The Inoue case is
impossible by \cite[Remark 4.2]{teleman}.  

The Hopf case can be extracted from \cite[Proof of Proposition 6.4]{DPS1994} and \cite[Proposition~3.2]{ChioseToma2023}, for convenience we give the short argument.
Every Hopf surface admits a finite unramified primary cover.
Hermitian semipositivity pulls back and descends under finite
unramified covers; for descent, pass to a finite unramified Galois
cover that factors through the given cover and take the geometric
mean of the deck-translated metrics.

For a diagonal primary Hopf surface
\[
 M=(\C^2\setminus\{0\})/\langle g\rangle,\qquad
 g(z_1,z_2)=(\alpha_1z_1,\alpha_2z_2),\qquad
 a_i=-\log|\alpha_i|>0.
\]
Choose a smooth convex regularized maximum $\widetilde{\max}(s,t)$,
equal to $\max(s,t)$ for $|s-t|\geq1$, satisfying
$\widetilde{\max}(s+c,t+c)=\widetilde{\max}(s,t)+c$. Set
\[
 \varphi=(a_1+a_2)\widetilde{\max}\left(
 a_1^{-1}\log|z_1|^2,\,
 a_2^{-1}\log|z_2|^2\right).
\]
This function is plurisubharmonic and extends smoothly across
each coordinate axis away from the origin, since there it agrees
with the branch involving the nonvanishing coordinate. Moreover,
\[
 \varphi\circ g=\varphi-2(a_1+a_2)
              =\varphi+\log|\det Dg|^2.
\]
Thus the metric assigning squared norm $e^{-\varphi}$ to
$\partial_{z_1}\wedge\partial_{z_2}$ descends to a smooth
semipositive metric on $K_M^{-1}$. This includes all elliptic primary Hopf
surfaces: they are precisely the diagonal ones with
$\alpha_1^p=\alpha_2^q$ for some positive integers $p,q$
\cite[\S3.1.2]{ChioseToma2023}.

A non-diagonal primary Hopf surface is thus non-elliptic and hence of class $0$, and so the obstruction proved in \cite[Proposition~3.2]{ChioseToma2023} shows that the only closed semipositive $(1,1)$-form is the zero form. Since $K_M^{-1}$ is not Hermitian flat (see e.g. \cite[p.149]{TW}), this shows that these surfaces cannot have $K_M^{-1}$ Hermitian semipositive. See also \cite{WuHopf2026} for a detailed discussion for semipositive line bundles on Hopf manifolds.

\subsection{Proof of  Theorem \ref{ueda}}
By Proposition \ref{ued}, the condition that the embedding $D\subset M$ is vertically linearizable is equivalent to $\mathcal{O}_M(D)$ being Hermitian semipositive. Also, it is clearly implied by the stronger condition that the embedding be linearizable, i.e. that $D\subset M$ admits a holomorphic tubular neighborhood.

Our assumption is that the elliptic curve $D$ is embedded in a compact complex surface $M$, with $\deg N_{D/M}=0$, and with the embedding formally vertically linearizable (equivalently, all Ueda classes $u_n(D,M)\in H^1(D,N_{D/M}^{-n})$ of this embedding vanish).

Our goal is to show that the embedding $D\subset M$ is vertically linearizable. If $N_{D/M}$ is torsion then this follows from \cite[Theorem 3]{Ueda}, so we assume also that $N_{D/M}$ is nontorsion. Then by \cite[Prop.3.1.2]{AD} we have that $\kappa(M)=-\infty,$ so by the Kodaira-Enriques classification $M$ must be rational, birationally ruled and non-rational, or of class VII.

Observe that the same argument as in \eqref{esatta} gives that $h^0(M,\mathcal{O}_M(mD))=1$ for every $m\geq 0$, so in particular the divisor $D$ in $M$ has Iitaka dimension zero. 

{\bf Case 1. }Assume first that $M$ is rational. Then \cite[Proposition 5]{Sakai} gives that either $D\sim K^{-1}_M$, and so we can apply Theorem \ref{thm:dps-semipositivity}, or else there is a $(-1)$-curve in $M$ disjoint from $D$. Contracting it, we can repeat this argument, which has to stop after finitely many such contractions.

{\bf Case 2. }Now assume that $M$ is non-rational and is birational to a ruled surface, so there is a sequence of point blowups $M\to M'$ and $M'=\mathbb{P}_B(V)\to B$ is a ruled surface. As noted in \cite{AD}, the base $B$ must be elliptic. Repeatedly contracting any $(-1)$-curve which is disjoint from $D$, and replacing $D$ with its image, we obtain another sequence of point blowups $M\to M''$. The morphism $f:M\to B$ restricted to the exceptional divisor of the last blowup of $M\to M''$ must be constant (it's a holomorphic map from $\mathbb{P}^1$ to an elliptic curve), so $f$ factors through this blowup. Repeating this, we see that the morphism $M\to M''$ is over $B$, and the resulting morphism $f'':M''\to B$ has generic fiber $\mathbb{P}^1$. Thus \cite[Proposition 6]{Sakai} applies to it, and shows that the image $D''$ of $D$ in $M''$ is a section of $f''$. We claim that $M''$ is minimal, hence of the form $M''=\mathbb{P}_B(W)\to B.$ Indeed, let $\nu:M''\to M_{\rm min}$  contract all vertical $(-1)$-curves. If $\nu$ was not the identity, then we can easily see that $M''$ would contain a $(-1)$-curve disjoint from $D''$, which is absurd. Indeed, consider the first blowup in $\nu$. If the center of the blowup does not lie on $\nu(D'')$ then we are done, and if it does then the strict transform of the fiber of $M_{\rm min}$ through that point is a $(-1)$-curve disjoint from the strict transform of $\nu(D'')$, and similarly for the next blowups.

So far we have shown that $M''=\mathbb{P}_B(W)\to B$. The section $D''$ corresponds to a quotient sequence
$$0\to Z\to W\to Y\to 0,$$
where $Z,Y$ are line bundles on $B$. The normal bundle of $D''$ is then isomorphic to $Z^{-1}\otimes Y$. The extension class of this sequence lies in $H^1(B,Z\otimes Y^{-1})=H^1(D'',N_{D''/M''}^{-1})=0,$ so the extension splits and $W=Z\oplus Y$ and $M''=\mathbb{P}_B(\mathcal{O}_B\oplus N_{D''/M''}).$ In particular, $D''\subset M''$ is linearizable, hence so is $D\subset M$.

{\bf Case 3. }Lastly, assume $M$ is class VII. In this case, \cite[Proposition 3.3.1]{AD} shows that $M$ is a diagonal nonresonant primary Hopf surface, a secondary Hopf surface which is a finite cyclic quotient of the previous one, or a blowup of these at points away from $D$. These blowups don't affect vertical linearizability, so it suffices to consider the diagonal nonresonant primary and secondary Hopf cases. In the first case, $M$ contains precisely $2$ elliptic curves, and by direct inspection their embeddings are linearizable, see \cite[\S2.2.6]{AD}, and the second case can be reduced to the first via the cyclic quotient \cite[\S2.3.1]{AD}:
on the finite covering Hopf surface, a neighborhood of each lifted
elliptic curve is a neighborhood of the zero section of its degree-zero
normal line bundle, hence its divisor bundle is unitary flat there.
Choose these neighborhoods disjoint when there is more than one
component, and shrink to a deck-invariant neighborhood. The pullback
of $\mathcal{O}_M(D)$ is the divisor bundle of the reduced inverse image of
$D$, so it admits a flat metric there. The geometric mean of its
finitely many deck translates is a deck-invariant flat metric and
descends to $\mathcal{O}_M(D)$ near $D$. Proposition~\ref{ued},
(3)$\Rightarrow$(4), gives the desired vertical linearization.

\subsection{Proof of Corollary \ref{stein}}
By the proof of Theorem \ref{thm:dps-semipositivity}, we obtain a pluriharmonic 
function $r$ defined on $X=S\setminus C$ away from a compact set, satisfying
$r+\log|w|^2$  is locally bounded near $C$, where
$w$ is a local defining function of $C$.
Then for $R_0 \gg 1$, the level set $\{r=R_0\}$ is a nonempty compact Levi-flat real hypersurface in $S\backslash C$, hence $S\backslash C$ cannot be Stein.

\bibliographystyle{alpha}
\bibliography{ref}

\end{document}